\documentclass[12pt,twoside]{amsart}
\usepackage{amssymb}
\usepackage{verbatim}
\usepackage{amsmath}
\usepackage{bm}
\usepackage{a4wide}
\usepackage[latin1]{inputenc}
\usepackage[T1]{fontenc}
\usepackage{times}
\usepackage{amssymb,latexsym}
\usepackage{enumerate}
\usepackage[stable]{footmisc}
\usepackage{color}
\usepackage[colorlinks,linkcolor=black,citecolor=black,urlcolor=black]{hyperref}

\usepackage{ulem}
 \usepackage{cases}

\usepackage{fancyhdr}
\fancypagestyle{plain}{%
    \fancyhf{}%
    \cfoot{\thepage}%
}
\newcommand{\D}{\ensuremath{\mathbb{D}}}
\newcommand{\C}{\ensuremath{\mathbb{C}}}

\newcommand{\bea}{\begin{eqnarray*}}
\newcommand{\eea}{\end{eqnarray*}}

\makeatletter
\newcommand{\sumprime}{\if@display\sideset{}{'}\sum%
            \else\sum'\fi}
\makeatother

\numberwithin{equation}{section}

\newtheorem{theorem}{Theorem}[section]

\newtheorem{corollary}[theorem]{Corollary}
\newtheorem{lemma}[theorem]{Lemma}

\numberwithin{definition}{section} %\def\thedefinition{\unskip}
\newtheorem{remark}{Remark}

\title[Energy asymptotics of holomorphic functions with application to Calder\'{o}n-Zygmund theory in $\C$]{Energy asymptotics of holomorphic functions with application to Calder\'{o}n-Zygmund theory in $\C$}
\dedicatory{}
\subjclass[2020]{32A10, 42B37, 42B20}

 \author{Hongrong Chen}
\address{School of Mathematics (Zhuhai), Sun Yat-Sen University, Zhuhai, Guangdong 519082, P. R.
  China}
 \email{chenhr73@mail2.sysu.edu.cn}

 \author{Guokuan Shao}
\address{School of Mathematics (Zhuhai), Sun Yat-Sen University, Zhuhai, Guangdong 519082, P. R.
  China}
 \email{shaogk@mail.sysu.edu.cn}

 \author{Jujie Wu}
\address{School of Mathematics (Zhuhai), Sun Yat-Sen University, Zhuhai, Guangdong 519082, P. R.
 	China}
\email{wujj86@mail.sysu.edu.cn}

\author{Wei Xia}
\address{School of Mathematics (Zhuhai), Sun Yat-Sen University, Zhuhai, Guangdong 519082, P. R.
  China}
 \email{xiaw27@mail2.sysu.edu.cn}

\thanks{This paper was supported by National Key R\&D Program of China, No. 2024YFA1015200. Guokuan Shao was partially supported by National Natural Science Foundation of China No.~12471082. Jujie Wu was partially supported by  National Natural Science Foundation of China No.~12671104  and  Natural Science Foundation of Guangdong Province, No. 2025A1515011428.}

\begin{document}

\begin{abstract}
The Calder\'on-Zygmund theory establishes the boundedness of singular integral operators on \(L^p\) spaces for \(1 < p < \infty\), yet it encounters a failure at the endpoint \(p = 1\). While radial counterexamples in \(\mathbb{R}^n\) are well-documented, Pan-Shao-Wang-Wu \cite{psww2026} has showed that every nonconstant holomorphic function provides a counterexample to the Poisson equation within the Calder\'on-Zygmund framework, with the singular locus being a complex subvariety of codimension one. In this paper, we focus on the complex one-dimensional case and establish stronger results. We prove asymptotic formulas with explicit constants for both the level-set integral and the sublevel-set energy. Then we give simplified proofs of the universal counterexamples to Calder\'on-Zygmund theory at \(p = 1\) in $\C$. Additionally, we construct a new family of counterexamples at the endpoint \(p = \infty\), showing that the failure of \(W^{2,\infty}\)-regularity is also a universal phenomenon in complex one dimension.
  \bigskip

  \noindent{{\sc Mathematics Subject Classification} (2020): 32A10, 42B37, 42B20 }

  \smallskip

  \noindent{{\sc Keywords}: Holomorphic function, Calder\'on-Zygmund theory, Sobolev regularity, energy asymptotics}
\end{abstract}

\maketitle

\tableofcontents

\section{Introduction}
The Calder\'on-Zygmund theory \cite{GT} stands as a cornerstone of modern harmonic analysis and partial differential equations. At its heart lies the systematic study of singular integral operators, which are fundamental to understanding the regularity of solutions to elliptic equations. A classic application is the Poisson equation
\[
\Delta u = g
\]
on a domain \(\Omega \subset \mathbb{R}^n\). In the classical H\"older setting, if \(g \in C^{k,\alpha}_\mathrm{loc}(\Omega)\), one expects \(u \in C^{k+2,\alpha}_\mathrm{loc}(\Omega)\). More generally, within the framework of Sobolev spaces, the theory establishes that for \(1 < p < \infty\), if \(g\) belongs to \(L_{\mathrm{loc}}^p(\Omega)\), then \(u\in W_{\mathrm{loc}}^{2,p}(\Omega)\).

This \(L^p\) regularity theory is indispensable in elliptic PDE, enabling existence and uniqueness results in settings where classical derivatives may fail. However, a fundamental limitation of the Calder\'on-Zygmund theory is its breakdown at the endpoint \(p = 1\). Singular integral operators, such as the Riesz transforms, are generally not bounded on \(L^1(\mathbb{R}^n)\). In the context of the Poisson equation, this means that \(g \in L_{\mathrm{loc}}^1(\Omega)\) does not guarantee \(u \in W_{\mathrm{loc}}^{2,1}(\Omega)\).

The standard textbook counterexample is the radial function
\[
u(x) = \log(-\log |x|^2)
\]
defined near the origin in \(\mathbb{R}^2\). Although the distributional Laplacian of \(u\) is represented by an \(L_{\mathrm{loc}}^1\)-function, its second-order derivatives are not locally integrable, exhibiting a blow-up of order \((|x|^2\log |x|^2)^{-1}\) near the origin. Although instructive, such radial examples have long fostered the impression that the \(L^1\) failure is a peculiarity of isolated point singularities.

A recent work by Pan-Shao-Wang-Wu \cite{psww2026} demonstrated that the failure of \(L^1\) regularity is a far more widespread phenomenon, deeply rooted in the geometry of complex varieties. They proved that every nonconstant holomorphic function \(f(z)\) on a domain \(\Omega \subset \mathbb{C}^n \cong \mathbb{R}^{2n}\) yields, via the iterated logarithmic transformation
\[
u(z) = \log(-\log |f(z) - f(z_0)|^2),
\]
a counterexample to the Calder\'on-Zygmund framework. In this example, the singular locus is not merely a point, but a complex subvariety of codimension one. This expands the catalogue of known counterexamples from simple radial functions to entire families of holomorphic functions, including those with cusps, nodes, and higher-order vanishing singularities.

Let $U$ be a unit ball or a polydisc in $\mathbb{C}^n$, and let $f$ be a nonconstant holomorphic function on an open neighborhood of $\overline{U}$. To establish the distributional validity of these counterexamples, one needs geometric estimates for the level sets \(\{|f| = \varepsilon\}\) and the sublevel sets \(\{|f| < \varepsilon\}\). In \cite[Theorem 1.1, Theorem 1.2]{psww2026}, sharp energy and volume estimates are established:
\[
\int_{\{z\in U: |f|=\varepsilon\}} |\partial f|\,dS = O(\varepsilon),\qquad \int_{\{z\in U: |f|<\varepsilon\}} |\partial f|^2\,dV = O(\varepsilon^2),
\]
as well as measure bounds for the corresponding level and sublevel sets:
\[
\mathcal{H}^{2n-1}(\{z\in U: |f|=\varepsilon\}) = O(\varepsilon^\gamma),\qquad \operatorname{Vol}(\{z\in U: |f|<\varepsilon\}) = O(\varepsilon^\tau),
\]
for some \(0 < \gamma \leq 1\) and \(0 < \tau \leq 2\). 

These questions are closely related to the Sobolev regularity of
logarithmic potentials associated with real-analytic sets. In particular, Shi and Zhang \cite{SZ2025} established that $\log\vert{}F\vert{} \in W^{1,1}_{\mathrm{loc}}$ near the zero locus of any real-analytic function $F$, provided its codimension is at least two. Conversely, Pan and Zhang \cite{PanZ2024} utilized Hironaka's resolution of singularities to construct local continuous $W^{1,1}_{\mathrm{loc}}$ functions whose weak gradients have removable singularities exactly on an arbitrary prescribed real-analytic set. Together, these developments underscore that the underlying geometry of analytic varieties fundamentally governs such critical Sobolev phenomena.

%Shi--Zhang
%\cite{SZ2025} proved that if $F$ is real analytic and the real codimension of
%its zero set is at least two, then $\log|F|\in W^{1,1}_{\mathrm{loc}}$ near the
%zero set. In a complementary direction, Pan--Zhang \cite{PanZ2024} used
%resolution of singularities to construct, near every point of an arbitrary
%real-analytic set, continuous functions in $W^{1,1}_{\mathrm{loc}}$ whose weak
%derivatives have removable singularities precisely on that set. These results
%further illustrate that the geometry of analytic sets plays a fundamental
%role in borderline Sobolev regularity. 

In this paper, we focus on the complex one-dimensional case \(\Omega \subset \mathbb{C}\). This setting exhibits a different structure: the zeros of holomorphic functions are isolated points, and the complex geometry simplifies considerably. This enables us to obtain sharp energy asymptotics and a more refined analysis of the failure of endpoint Calder\'on--Zygmund estimates. In contrast to the $O(\varepsilon)$ and $O(\varepsilon^2)$ upper bounds in higher dimensions, we establish precise asymptotic formulas with explicit constants for the energy integrals.

%This allows us to obtain sharp energy asymptotics and a more explicit analysis of the Calder\'on--Zygmund endpoint failures. Unlike the estimates \(O(\varepsilon)\) and \(O(\varepsilon^2)\) obtained in higher dimensions, we establish asymptotic formulas with explicit constants for the energy integrals, which is stronger than the case in higher dimensions. 

Given a domain $U$ in $\mathbb{C}$,  denote by $W ^{k,p}(U) $ (resp. $W_{\mathrm{loc}}^{k,p}(U)$) the Sobolev space of all functions on  $U$ whose weak derivatives of order $\leq k$ exist and belong to $L^p(U)$ (  resp. $L_{\mathrm{loc}}^p(U)$),   for some $k\in \mathbb{Z}^+,  p\geq 1$. Let $f$ be a holomorphic function in $U$  and $Z(f)= \{z \in U:  f(z) =0 \}$ be the zero set of $f$. We denote $ \partial_z f := \frac{\partial f}{\partial z}$,  $ \partial_{\bar{z}} f := \frac{\partial f}{\partial \bar{z}}$.   Here without loss of generality, we assume $Z(f) \neq \emptyset$ throughout the paper. The Laplacian of $u$ in $\mathbb{C}$ is defined as $\Delta u := 4 \frac{\partial^2 u}{\partial z \partial \bar{z}}.$ Let $\mathbb{D} := \{ z \in \mathbb{C} : |z| < 1 \}$. For $\varepsilon > 0$, set
$$
\Gamma_{\varepsilon} := \{z \in \mathbb{D} : |f(z)| = \varepsilon\}, \qquad
E_{\varepsilon} := \{z \in \mathbb{D} : |f(z)| < \varepsilon\}.
$$

\begin{theorem}\label{thm3.2}
Let $U \subset \mathbb{C}$ be an open neighborhood of $\overline{\mathbb{D}}$, and let $f \in \mathcal{O}(U)$ be a non-trivial holomorphic function. Let $p_1, \dots, p_m \in \mathbb{D}$ denote the zeros of $f$ in $\mathbb{D}$ with multiplicities $k_1, \dots, k_m$, and let $q_1, \dots, q_r \in \partial\mathbb{D}$ denote the boundary zeros of $f$ with multiplicities $\ell_1, \dots, \ell_r$. 
Then, as $\varepsilon \to 0^+$, we have the asymptotic expansions
\begin{equation}\label{eq:level-set-asymptotic}
\int_{\Gamma_{\varepsilon}} |f'(z)|\,ds = 2\pi \left( \sum_{j=1}^{m} k_{j} + \frac{1}{2} \sum_{\nu=1}^{r} \ell_{\nu} \right) \varepsilon + o(\varepsilon),
\end{equation}
and
\begin{equation}\label{eq:sublevel-energy-asymptotic}
\int_{E_{\varepsilon}} |f'(z)|^{2}\,dA = \pi \left( \sum_{j=1}^{m} k_{j} + \frac{1}{2} \sum_{\nu=1}^{r} \ell_{\nu} \right) \varepsilon^{2} + o(\varepsilon^{2}).
\end{equation}
\end{theorem}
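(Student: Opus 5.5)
We will first prove the level-set asymptotic \eqref{eq:level-set-asymptotic} and then obtain \eqref{eq:sublevel-energy-asymptotic} from it by the coarea formula. Since $f\not\equiv 0$ on $U$, it has only finitely many zeros in a neighborhood of $\overline{\mathbb{D}}$. For small $\varepsilon$ the set $\{|f|\le\varepsilon\}\cap\overline{\mathbb D}$ is therefore contained in a union of small disjoint discs $B(p_j,\delta)$ and $B(q_\nu,\delta)$, because $|f|\ge c_\delta>0$ on the compact rest of $\overline{\mathbb D}$. It then suffices to localize, treating one zero at a time.

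\emph{Interior zero $p$ of order $k$.} We write $f=g^k$ near $p$ with $g$ holomorphic and $g'(p)\neq0$, so $g$ is biholomorphic near $p$ with $g(p)=0$. Then $\{|f|=\varepsilon\}\cap B(p,\delta)=g^{-1}(\{|w|=\varepsilon^{1/k}\})$ for small $\varepsilon$. Since $|f'(z)|\,ds$ is the arclength of the image curve $f(\Gamma)$, we have $\int_{\Gamma}|f'|\,ds=\mathrm{length}(f(\Gamma))$ counted with multiplicity. The curve $f(\Gamma)$ winds $k$ times around $|w|=\varepsilon$, giving exactly $2\pi k\varepsilon$. This local computation is exact, with no error term.

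\emph{Boundary zero $q$ of order $\ell$.} In the same coordinate $w=g(z)$ the disc $\mathbb D$ near $q$ becomes a domain $g(\mathbb D\cap B(q,\delta))$ whose boundary near $0$ is a real-analytic arc $g(\partial\mathbb D)$ through $0$ with tangent line $L$ at $0$. The level set $\Gamma_\varepsilon$ near $q$ corresponds to the part of the circle $|w|=\varepsilon^{1/\ell}=:\rho$ lying in this domain. Its contribution to $\int|f'|\,ds$ equals $\ell\varepsilon\cdot(\text{angle subtended})$, since $d\arg f=\ell\, d\arg w$ and $|f'|\,ds=|f|\,|d\arg f|$ on the level set. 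The main obstacle is to show that this angle tends to $\pi$ as $\rho\to0$. We expect to handle it as follows. The boundary arc is a graph over $L$ with deviation $O(|w|^2)$, so within the annulus $|w|=\rho$ it lies in the angular sector of half-width $O(\rho)$ around $L$. Hence the circle portion inside the domain is an arc of angle $\pi+O(\rho)$, possibly after a small union argument using that the arc crosses each small circle transversally. This gives a contribution $\pi\ell\varepsilon+O(\varepsilon^{1+1/\ell})$, i.e.\ $2\pi\cdot\frac12\ell\,\varepsilon+o(\varepsilon)$. Summing over all zeros gives \eqref{eq:level-set-asymptotic}.

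\emph{Sublevel energy.} On $\mathbb D$ the function $\phi=|f|$ satisfies $|\nabla\phi|=|f'|$ away from zeros. By the coarea formula,
\[
\int_{E_\varepsilon}|f'|^2\,dA=\int_0^\varepsilon\Big(\int_{\Gamma_t}\frac{|f'|^2}{|\nabla|f||}\,ds\Big)dt=\int_0^\varepsilon\int_{\Gamma_t}|f'|\,ds\,dt .
\]
The zero set has measure zero, so it can be ignored. Inserting $\int_{\Gamma_t}|f'|\,ds=2\pi N t+o(t)$, where $N=\sum_j k_j+\frac12\sum_\nu\ell_\nu$, and integrating yields $\pi N\varepsilon^2+o(\varepsilon^2)$, which is \eqref{eq:sublevel-energy-asymptotic}. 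The only care needed is that the $o(t)$ is uniform as $t\to0$, which is immediate from its definition as a limit.
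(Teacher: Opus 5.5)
Your proof is correct. For \eqref{eq:level-set-asymptotic} it follows the paper's argument. You localize to disjoint discs, write $f=\psi^k$ with $\psi$ biholomorphic, get the exact contribution $2\pi k\varepsilon$ at interior zeros, and at boundary zeros reduce to the angular measure of $\{|w|=\rho\}$ inside the image half-domain, which is the paper's Lemma~\ref{lem3.1}. You deviate in two small ways. First, you use real-analyticity of $\psi(\partial\mathbb D)$ to get boundary deviation $O(\rho^2)$, hence an explicit error $O(\varepsilon^{1+1/\ell_\nu})$, where the paper assumes only a $C^1$ boundary and gets $o(\varepsilon)$. Second, your transversality and union remark is unnecessary: only the angular measure enters, and the two-sided inclusion between $\{\sin\theta>\delta(\rho)/\rho\}$ and $\{\sin\theta\ge-\delta(\rho)/\rho\}$ controls it without knowing that the slice is an arc.

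The genuine difference is in \eqref{eq:sublevel-energy-asymptotic}. The paper redoes the local computation, integrating $\ell_\nu^2|w|^{2\ell_\nu-2}$ in polar coordinates over $\Omega_\nu\cap\{|w|<\rho_\nu\}$. You instead obtain it globally from \eqref{eq:level-set-asymptotic} by the coarea formula with $|\nabla|f||=|f'|$.

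What your route buys: it is shorter, it needs no second pass through the interior and boundary cases, and it shows transparently why the energy coefficient is exactly half the level-set coefficient. It also transfers any quantitative error rate, giving $O(\varepsilon^{2+1/\ell_\nu})$. What the paper's route buys: it is more elementary, and the authors deliberately avoid the coarea formula, advertising it as unnecessary in dimension one.

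The two computations are in fact the same. In the chart, $\int_{\Gamma_t\cap W_\nu}|f'|\,ds=\ell_\nu t\,|\Theta_\nu(t^{1/\ell_\nu})|$. The substitution $t=s^{\ell_\nu}$ turns your $\int_0^\varepsilon(\cdot)\,dt$ into the paper's $\ell_\nu^2\int_0^{\rho_\nu}s^{2\ell_\nu-1}|\Theta_\nu(s)|\,ds$.
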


These formulas reveal a clear geometric principle: each interior zero contributes its full multiplicity to the leading coefficient, whereas each boundary zero contributes exactly half of its multiplicity.

%\begin{theorem}\label{thm3.2}
%		Let \(U\) be an open neighborhood of \(\overline{\mathbb{D}}\), and let \(f \in \mathcal O(U)\) with \(f \not\equiv 0\). Let \(p_1, \dots, p_m \in \mathbb{D}\) be all zeros of \(f\) in $\D$, with multiplicities \(k_1, \dots, k_m\), and let \(q_1, \dots, q_r \in \partial\mathbb{D}\) be all zeros of \(f\) on the boundary, with multiplicities \(\ell_1, \dots, \ell_r\).
%Then, as $\varepsilon\to 0^{+}$, we have
%\begin{equation}\label{eq:level-set-asymptotic}
%\int_{\Gamma_{\varepsilon}} |f'(z)|\,ds = 2\pi \left( \sum_{j=1}^{m} k_{j} + \frac{1}{2} \sum_{\nu=1}^{r} \ell_{\nu} \right) \varepsilon + o(\varepsilon),
%\end{equation}
%and
%\begin{equation}\label{eq:sublevel-energy-asymptotic}
%\int_{E_{\varepsilon}} |f'(z)|^{2}\,dA = \pi \left( \sum_{j=1}^{m} k_{j} + \frac{1}{2} \sum_{\nu=1}^{r} \ell_{\nu} %\right) \varepsilon^{2} + o(\varepsilon^{2}).
%\end{equation}
%\end{theorem}

%These formulas reveal a geometric principle: each interior zero contributes its full multiplicity, while each boundary zero contributes exactly half of its multiplicity. %This precise information is inaccessible in higher dimensions, where the geometry of the zero set is far more complicated.

\medskip
While Theorem \ref{thm3.2} provides sharp weighted asymptotic formulas for the level and sublevel sets of $f$, establishing distributional identities subsequently requires unweighted size estimates for these geometric sets.

\begin{theorem}\label{thm3.3}
Under the assumptions and notation of Theorem \ref{thm3.2}, there exist positive constants $C_j, C_j'$ ($1 \le j \le m$) and $C_\nu, C_\nu'$ ($1 \le \nu \le r$) such that, for all sufficiently small $\varepsilon > 0$,
\[
\mathcal{H}^1(\Gamma_\varepsilon)
\le
\sum_{j=1}^m C_j \varepsilon^{1/k_j}
+
\sum_{\nu=1}^r C_\nu \varepsilon^{1/\ell_\nu},
\]
and
\[
\operatorname{Area}(E_\varepsilon)
\le
\sum_{j=1}^m C_j' \varepsilon^{2/k_j}
+
\sum_{\nu=1}^r C_\nu' \varepsilon^{2/\ell_\nu}.
\]
\end{theorem}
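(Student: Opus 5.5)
We localize at the zeros. Let $\{p_j\}\cup\{q_\nu\}$ be the zeros of $f$ in $\overline{\mathbb{D}}$; there are finitely many since $f\not\equiv 0$ is holomorphic on $U\supset\overline{\mathbb{D}}$. Around each such zero $a$ of multiplicity $k$ we choose a small disc $B(a,\rho)\subset U$ on which $f(z)=(z-a)^k h(z)$ with $h$ holomorphic and zero-free. Shrinking $\rho$, there is a holomorphic $k$-th root $g=(z-a)h^{1/k}$ with $f=g^k$, $g(a)=0$, $g'(a)=h(a)^{1/k}\neq 0$. By the inverse function theorem, $g$ is biholomorphic from $B(a,\rho)$ onto a neighbourhood $V$ of $0$, and $|g'|\ge c>0$ and $|(g^{-1})'|\le L$ on $B(a,\rho)$ and $V$ respectively. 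On the compact set $K:=\overline{\mathbb{D}}\setminus\bigcup B(a,\rho/2)$, $|f|\ge\delta>0$. Hence for $\varepsilon<\delta$ both $\Gamma_\varepsilon$ and $E_\varepsilon$ lie in $\bigcup_a B(a,\rho/2)$, and it suffices to estimate each local piece separately and sum.

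Near a zero $a$, we have $\Gamma_\varepsilon\cap B(a,\rho/2)\subset\{|g|=\varepsilon^{1/k}\}\cap B(a,\rho)$ and $E_\varepsilon\cap B(a,\rho/2)\subset\{|g|<\varepsilon^{1/k}\}\cap B(a,\rho)$. For small $\varepsilon$ the circle $\{|w|=\varepsilon^{1/k}\}$ and its disc lie in $V$. Their images under $g^{-1}$ are therefore a closed curve of length at most $L\cdot 2\pi\varepsilon^{1/k}$ and a region of area at most $L^2\pi\varepsilon^{2/k}$, since area scales by $|(g^{-1})'|^2$. Intersecting with $\mathbb{D}$ only decreases $\mathcal{H}^1$ and area. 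This gives constants $C=2\pi L$ and $C'=\pi L^2$ attached to each $p_j$ (with $k=k_j$) and each $q_\nu$ (with $k=\ell_\nu$), for all $\varepsilon$ below a threshold, which is the claimed bound.

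The only delicate point is the existence of the local holomorphic root $g$ and the uniformity of the Lipschitz constant of $g^{-1}$. Both are handled by choosing $\rho$ small enough that $h(B(a,\rho))$ lies in a disc avoiding $0$, so that a branch of $h^{1/k}$ exists, and that $g$ is injective with $g'$ bounded away from $0$. We then take $L=\sup_{\overline V'}|(g^{-1})'|$ on a slightly smaller compact neighbourhood $\overline{V'}\subset V$ containing the relevant small discs. Boundary zeros need no special treatment, because we only require upper bounds and discard the part of the local set outside $\mathbb{D}$.
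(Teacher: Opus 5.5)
Your proof is correct. Its main lines match the paper's: localize at the finitely many zeros in $\overline{\mathbb{D}}$, write $f=\psi^{k}$ with $\psi$ a local biholomorphism, bound $|(\psi^{-1})'|$ near $0$, and transport the length of $\{|w|=\varepsilon^{1/k}\}$ and the area of $\{|w|<\varepsilon^{1/k}\}$.

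The one genuine difference is at the boundary zeros $q_\nu$. The paper works with $\Omega_\nu=\psi_\nu(W_\nu\cap\mathbb{D})$ and invokes Lemma~\ref{lem3.1}, which uses that $\partial\Omega_\nu$ is $C^1$. This gives $\operatorname{length}(\Omega_\nu\cap\{|w|=\rho\})\le\alpha_\nu\rho$ and $\operatorname{Area}(\Omega_\nu\cap\{|w|<\rho\})\le\beta_\nu\rho^2$. You instead drop the constraint $z\in\mathbb{D}$ and bound by the full circle and disc, using monotonicity of $\mathcal{H}^1$ and area. You also use subadditivity, so your discs need not even be disjoint.

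Since Theorem~\ref{thm3.3} asks only for upper bounds with unspecified constants, your shortcut is legitimate and more elementary. It costs roughly a factor $2$ in $C_\nu$ and $C_\nu'$, and it uses no regularity of $\partial\mathbb{D}$ at all. The paper's route records the sharper fact that a boundary zero sees only about half of each small circle. That information is needed for the exact leading constants in Theorem~\ref{thm3.2}, but not here.
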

%The preceding theorem gives sharp weighted asymptotic formulas for the small level and sublevel sets of $f$. For the later applications to distributional identities, we shall also need unweighted estimates for the geometric size of these sets.
%\begin{theorem}\label{thm3.3}
%Under the assumptions and notation of  Theorem \ref{thm3.2},  there exist constants $C_j, C_j'> 0$, $1 \le j \le m$, and $C_\nu, C_\nu' > 0$, $1 \le \nu \le r$, such that, for all sufficiently small $\varepsilon > 0$,
%	\[
%		\mathcal H^1(\Gamma_\varepsilon)
%		\le
%		\sum_{j=1}^m C_j \varepsilon^{1/k_j}
%		+
%		\sum_{\nu=1}^r C_\nu \varepsilon^{1/\ell_\nu},
%		\]
%		and
%		\[
%		\operatorname{Area}(E_\varepsilon)
%		\le
%%		\sum_{j=1}^m C_j' \varepsilon^{2/k_j}
%		+
%		\sum_{\nu=1}^r C_\nu' \varepsilon^{2/\ell_\nu}.
%		\]
%	\end{theorem}

As an immediate consequence, we obtain the following uniform upper bounds, which will be utilized repeatedly.

\begin{corollary}\label{c01}
Assume that $m+r \ge 1$, and define
\[
\kappa := \max\{ k_1, \dots, k_m, \ell_1, \dots, \ell_r \}.
\]
Then there exist constants $C > 0$ and $\varepsilon_0 > 0$ such that, for all $0 < \varepsilon < \varepsilon_0$,
\[
\mathcal{H}^1(\Gamma_\varepsilon) \le C \varepsilon^{1/\kappa}
\qquad \text{and} \qquad
\operatorname{Area}(E_\varepsilon) \le C \varepsilon^{2/\kappa}.
\]
\end{corollary}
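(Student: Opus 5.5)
The corollary follows directly from Theorem \ref{thm3.3}, so the plan is to combine its finitely many terms into a single power of $\varepsilon$. Theorem \ref{thm3.3} gives, for all sufficiently small $\varepsilon$, say $0<\varepsilon<\varepsilon_1$,
\[
\mathcal{H}^1(\Gamma_\varepsilon)\le \sum_{j=1}^m C_j\varepsilon^{1/k_j}+\sum_{\nu=1}^r C_\nu\varepsilon^{1/\ell_\nu},\qquad
\operatorname{Area}(E_\varepsilon)\le \sum_{j=1}^m C_j'\varepsilon^{2/k_j}+\sum_{\nu=1}^r C_\nu'\varepsilon^{2/\ell_\nu}.
\]
The hypothesis $m+r\ge 1$ guarantees that $\kappa$ is a well-defined positive integer, since every multiplicity is at least $1$.

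The key observation is monotonicity in the exponent. For $0<\varepsilon<1$, the map $a\mapsto \varepsilon^{a}$ is decreasing. Each multiplicity $k_j$ or $\ell_\nu$ is at most $\kappa$, so $1/k_j\ge 1/\kappa$ and $2/k_j\ge 2/\kappa$, and likewise for the $\ell_\nu$. Hence
\[
\varepsilon^{1/k_j}\le\varepsilon^{1/\kappa},\qquad \varepsilon^{2/k_j}\le \varepsilon^{2/\kappa},
\]
with the same inequalities for the exponents coming from the $\ell_\nu$.

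Set $\varepsilon_0:=\min\{\varepsilon_1,1\}$ and
\[
C:=\max\Bigl\{\sum_j C_j+\sum_\nu C_\nu,\ \sum_j C_j'+\sum_\nu C_\nu'\Bigr\}.
\]
Summing the bounds above then gives both inequalities for all $0<\varepsilon<\varepsilon_0$.

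There is no real obstacle here; the only point needing care is restricting to $\varepsilon<1$, so that the comparison of powers goes in the right direction.
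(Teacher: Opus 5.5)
Your proposal is correct and follows the paper's proof essentially verbatim: you invoke Theorem \ref{thm3.3}, use that $a\mapsto\varepsilon^a$ is decreasing for $0<\varepsilon<1$ together with $k_j,\ell_\nu\le\kappa$, and absorb the finitely many constants into one $C$. Your explicit choices $\varepsilon_0=\min\{\varepsilon_1,1\}$ and $C$ as the larger of the two constant sums simply make precise what the paper leaves as ``after increasing $C$ if necessary.''
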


As an application of Theorems \ref{thm3.2} and \ref{thm3.3}, we present a self-contained proof of the failure of endpoint Calder\'on--Zygmund estimates in one dimension.

%Next we give a simple corollary, which will be used repeatedly.
%    \begin{corollary}\label{c01}
%		Assume that \(m+r \ge 1\), and let
%		$
	%	\kappa:= \max\{ k_1, \dots, k_m, \ell_1, \dots, \ell_r \}.
	%	$
	%	Then there exist constants \(C > 0\) and \(\varepsilon_0 > 0\) such that, for all \(0 < \varepsilon < %\varepsilon_0\),
	%	\[
	%	\mathcal H^1(\Gamma_\varepsilon) \le C \varepsilon^{1/\kappa},
	%	\qquad
	%	\operatorname{Area}(E_\varepsilon) \le C \varepsilon^{2/\kappa}.
	%	\]
%	\end{corollary}

%As an application of Theorem \ref{thm3.2} and Theorem \ref{thm3.3}, we give an alternative proof of the following result on universal counterexamples for Calder\'on-Zygmund estimates in the one-dimensional case. %without using the \L{}ojasiewicz inequality, the coarea formula and Hironaka's resolution, compared with \cite[Theorem 1.4]{psww2026}.\begin{theorem}\label{th:maintheorem1}

\begin{theorem}\label{th:maintheorem1}
Let $\Omega\subset\mathbb C$ be a domain, and let $f$ be a nonconstant holomorphic function satisfying
$
Z(f):=\{z\in\Omega:f(z)=0\}\neq\emptyset
$
and
$
|f|<\frac1{10}
$
on $\Omega$.
Define
\[
u(z):=\log\bigl(-\log|f(z)|^2\bigr),
\]
and
\begin{equation}
g_u(z):=
\begin{cases}
\Delta u, & z\notin Z(f),\\
0, & z\in Z(f).
\end{cases}
\end{equation}
\begin{enumerate}
\item $ u\in W_{\mathrm{loc}}^{1,2}(\Omega)$, but $u \notin  W_{\mathrm{loc}}^{1,p}(\Omega) $\ \text { for any } \ $p>2$ near $Z(f)$;
\item  $g_u \in  L_{\mathrm{loc}}^1(\Omega)$;
\item $\Delta u= g_u$ on $\Omega$  in the sense of distributions.  i. e., $u$ is a weak solution of $\Delta u= g_u$ on $\Omega$;
 \item Finally,  $u \notin  W_{\mathrm{loc}}^{2,1}(\Omega) $ near $Z(f)$.
\end{enumerate}
\end{theorem}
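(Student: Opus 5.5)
Because every claim is local, we reduce to a small closed disc $\overline{D}\subset\Omega$ around a point of $Z(f)$; after an affine change of variables we may take $D=\mathbb D$ with $f$ holomorphic near $\overline{\mathbb D}$, so Theorem \ref{thm3.2}, Theorem \ref{thm3.3} and Corollary \ref{c01} apply. (Discs containing no zeros are trivial: $u$ is smooth there.) We also fix the case with no boundary zeros on $\partial\mathbb D$, which only requires choosing the radius generically.

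Away from $Z(f)$, write $\phi=-\log|f|^2>\log 100>0$, so $u=\log\phi$. We have $\partial_z\phi=-f'/f$ and $\partial_z\partial_{\bar z}\phi=0$. Hence
\[
\partial_z u=-\frac{f'}{f\,\phi},\qquad \Delta u=4\partial_z\partial_{\bar z}u=-\frac{4|f'|^2}{|f|^2\phi^2}.
\]
The second derivatives behave like
\[
\partial_z^2 u=-\frac{f''}{f\phi}+\frac{(f')^2}{f^2\phi}-\frac{(f')^2}{f^2\phi^2},
\]
and the middle term dominates near a zero.

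For parts (1) and (2) we use the layer-cake decomposition over the dyadic shells $A_j=E_{2^{-j}}\setminus E_{2^{-j-1}}$, $j\ge j_0$. On $A_j$ we have $|f|\approx 2^{-j}$ and $\phi\approx j$. By Theorem \ref{thm3.2}, $\int_{A_j}|f'|^2\,dA\approx c\,4^{-j}$ with $c>0$ whenever $\mathbb D$ contains a zero. Therefore
\[
\int_{A_j}|\nabla u|^2\approx\int_{A_j}|\Delta u|\approx j^{-2},
\]
which is summable, while $u$ itself is in $L^2$ since $\log\phi$ grows slowly and $\operatorname{Area}(E_\varepsilon)$ is controlled by Corollary \ref{c01}. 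For $p>2$ we instead use the local model: near a zero of order $k$, $|f'|/|f|\approx k/|z-p|$, so
\[
|\nabla u|^p\approx |z-p|^{-p}\,\bigl|\log|z-p|\bigr|^{-p},
\]
which is not integrable in dimension two. Alternatively, Hölder on $A_j$ together with the upper bound $\operatorname{Area}(A_j)\lesssim 2^{-2j/k}$ gives the same conclusion.

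For part (3), fix $\psi\in C_c^\infty$. We compute $\int u\Delta\psi$ over $\mathbb D\setminus\overline{E_\varepsilon}$ using Green's identity, and the boundary terms live on $\Gamma_\varepsilon$. The first is $\int_{\Gamma_\varepsilon}|u||\nabla\psi|\,ds\lesssim \log\log(1/\varepsilon)\,\varepsilon^{1/\kappa}\to0$, by Corollary \ref{c01}. The second is
\[
\int_{\Gamma_\varepsilon}\psi\,\partial_n u\,ds\lesssim \int_{\Gamma_\varepsilon}\frac{|f'|}{\varepsilon\log(1/\varepsilon)}\,ds=O\bigl(1/\log(1/\varepsilon)\bigr)\to0,
\]
by \eqref{eq:level-set-asymptotic}. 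Here we use that $\Gamma_\varepsilon$ is a smooth curve for small $\varepsilon$, since $f'\ne0$ near but off the zeros. Part (2) and dominated convergence then give $\Delta u=g_u$ in the sense of distributions. The same argument, applied with one derivative, shows that the classical gradient is the weak gradient, which completes part (1).

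For part (4), the main obstacle is to show that the weak second derivatives, if they existed in $L^1$, would have to equal the pointwise ones. We argue by contradiction. If $u\in W^{2,1}$, then $\partial_z u\in W^{1,1}$, and its weak derivative coincides a.e. with the classical $\partial_z^2u$ off the null set $Z(f)$. So it suffices to show that $\partial_z^2u\notin L^1$ near a zero $p$ of order $k$. On $A_j$ the dominant term has modulus $|f'|^2/(|f|^2\phi)\approx 4^{j}|f'|^2/j$. The other two terms are smaller: the last by a factor $1/j$, and the first because $|f''/f|\lesssim |f'|^2/|f|^2$ near $p$, which follows from the local form $f=(z-p)^kh$. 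Hence
\[
\int_{A_j}|\partial_z^2u|\gtrsim\frac{1}{j}-O\!\left(\frac{1}{j^2}\right),
\]
using the lower bound from \eqref{eq:sublevel-energy-asymptotic} applied to $E_{2^{-j}}\setminus E_{2^{-j-1}}$, which gives $\pi k(1-\tfrac14)4^{-j}(1+o(1))$. The harmonic sum diverges, so $\partial_z^2u\notin L^1$, and since $\partial_x^2,\partial_y^2,\partial_x\partial_y$ combine to $\partial_z^2$, $u\notin W^{2,1}_{\mathrm{loc}}$.
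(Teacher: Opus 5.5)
Your parts (1)--(3) are sound. Part (3) is essentially the paper's argument. Parts (1)--(2) take a different but legitimate route: you sum over dyadic shells using \eqref{eq:sublevel-energy-asymptotic} and Corollary \ref{c01}, and you get the weak gradient from the Stokes argument on $\Gamma_\varepsilon$. The paper instead uses the coordinate $w$ with $f=w^m$ and Lemma \ref{lm:ruojieinonedimension}.

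The gap is in part (4), and it already appears in your remark that ``the middle term dominates near a zero.'' You dismiss $-f''/(f\phi)$ because $|f''/f|\lesssim|f'|^2/|f|^2$. A bound with an unspecified constant gives no lower bound on the sum of the first two terms, and whether they cancel is exactly what matters. For a zero of order $k\ge2$ they are of the same order. From $f=(z-p)^kh$,
\[
\frac{f''}{f}=\frac{k(k-1)}{(z-p)^2}+O\bigl(|z-p|^{-1}\bigr),\qquad \Bigl(\frac{f'}{f}\Bigr)^2=\frac{k^2}{(z-p)^2}+O\bigl(|z-p|^{-1}\bigr),
\]
so $|f''/f|/|f'/f|^2\to(k-1)/k$, which is close to $1$ for large $k$. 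Your triangle-inequality bound $\int_{A_j}|\partial_z^2u|\gtrsim 1/j-O(1/j^2)$ is therefore not justified as written.

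What rescues the argument is the exact cancellation $k^2-k(k-1)=k\neq0$:
\[
\Bigl(\frac{f'}{f}\Bigr)^2-\frac{f''}{f}=-\Bigl(\frac{f'}{f}\Bigr)'=\frac{k}{(z-p)^2}+O(1).
\]
Hence $|(f'/f)^2-f''/f|\ge\frac{1}{2k}|f'/f|^2$ near $p$, which gives
\[
|\partial_z^2u|\ge\frac{1}{2k}\,\frac{|f'|^2}{|f|^2\phi}-\frac{|f'|^2}{|f|^2\phi^2}.
\]
With this pointwise bound your shell summation again gives $\int_{A_j}|\partial_z^2u|\gtrsim 1/j$ for large $j$, and the divergence follows.

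The paper avoids the issue by computing in the coordinate $w$ with $f=w^m$. There $\partial_w^2\widetilde u=-(1+\log|w|^2)/(w^2(\log|w|^2)^2)$ already contains this cancellation, and the same $m^2-m(m-1)=m$ cancellation is stated explicitly in the proof of Theorem \ref{th:maintheorem2}(4). The repair needs the local normal form pointwise anyway, so in part (4) the layer-cake decomposition gains nothing over the paper's direct computation.
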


The next result concerns the reciprocal profile \(v=1/u\). In contrast to \(u\), this function belongs to \(W_{\mathrm{loc}}^{2,1}\), while failing to belong to \(W_{\mathrm{loc}}^{2,p}\) for any \(p>1\).

\begin{theorem}\label{th:maintheorem2}
Under the assumptions of Theorem \ref{th:maintheorem1}, define
\begin{equation}
v(z):=
\begin{cases}
\dfrac{1}{\log\bigl(-\log|f(z)|^2\bigr)}, & z\notin Z(f),\\[6pt]
0, & z\in Z(f).
\end{cases}
\end{equation}
 Then the following statements hold.
 \begin{enumerate}
\item $v$ is continuous on $\Omega$.  In particular, $v \in L^\infty (\Omega)$, and $v \in W_\mathrm{loc}^{1,2}(\Omega) $; but  $v \notin W_\mathrm{loc}^{1,p}(\Omega) $  \text { for any } \ $p>2$   near $Z(f)$;
\item  $g_v \in L^1_{\mathrm{loc}}(\Omega)$,  where
\begin{equation}
g_v(z):=
\begin{cases}
\Delta v, & z\notin Z(f),\\
0, & z\in Z(f).
\end{cases}
\end{equation}
  \item $\Delta v= g_v$ holds in the sense of distributions in $\Omega$.
 \item  $v\in W_\mathrm{loc}^{2,1}(\Omega) $,  but $v\notin W_\mathrm{loc}^{2,p}(\Omega) $ for any $p>1$ near $Z(f)$.
\end{enumerate}
 \end{theorem}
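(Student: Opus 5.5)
Throughout, set $L := -\log|f|^2 > \log 100$. On $\Omega\setminus Z(f)$ we have $u=\log L$ and $v = 1/\log L$.

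\textbf{Step 1: pointwise formulas.} Since $\partial_z L = -f'/f$, we get
\[
v_z = -\frac{L_z}{L(\log L)^2} = \frac{f'}{fL(\log L)^2},
\qquad
|\nabla v| = \frac{2|f'|}{|f|\,L(\log L)^2}.
\]
Using $\partial_{\bar z}(f'/f)=0$ and $\partial_{\bar z}L=-\overline{f'/f}$, a direct computation gives
\[
\Delta v = 4v_{z\bar z} = \frac{4|f'|^2}{|f|^2}\,\frac{L\log L + 2L + \log L\cdot(\text{lower})}{L^3(\log L)^3}
\sim \frac{4|f'|^2}{|f|^2 L^2(\log L)^2}.
\]
Similarly,
\[
v_{zz} = \frac{(f'/f)'}{L(\log L)^2} + \frac{(f'/f)^2}{L^2(\log L)^2}\Bigl(1+\frac{2}{\log L}\Bigr).
\]
Near a zero $p$ of multiplicity $k$, write $f=(z-p)^k h$ with $h(p)\neq0$. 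Then $f'/f = k/(z-p)+O(1)$ and $(f'/f)' = -k/(z-p)^2+O(1)$. With $r=|z-p|$ we have $L\approx 2k|\log r|$, so every second derivative is $\asymp 1/\bigl(r^2|\log r|(\log|\log r|)^2\bigr)$. The leading terms come from $(f'/f)'$ and do not cancel, because the second term has an extra factor $1/L$.

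\textbf{Step 2: statements (1) and (4) near a zero.}

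\emph{Continuity.} Since $L\to\infty$ at $Z(f)$, we have $v\to0$, so $v$ is continuous and bounded.

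\emph{First derivatives.} In polar coordinates around $p$,
\[
\int|\nabla v|^2 \approx \int_0 \frac{dr}{r\,|\log r|^2(\log|\log r|)^4} < \infty,
\]
whereas
\[
\int|\nabla v|^p \approx \int_0 \frac{r^{1-p}\,dr}{(\cdots)}
\]
diverges for $p>2$, since the logarithmic factors cannot compensate a power of $r$.

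\emph{Second derivatives.} The pointwise integrability is
\[
\int_0 \frac{r\,dr}{r^2|\log r|(\log|\log r|)^2} = \int_0 \frac{dr}{r|\log r|(\log|\log r|)^2} < \infty,
\]
where the substitution $t=\log|\log r|$ gives $\int dt/t^2$. For $p>1$ the integral $\int r^{1-2p}(\cdots)\,dr$ diverges.

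Away from $Z(f)$ everything is smooth, and $Z(f)$ is discrete. A lower bound near $p$ gives the non-membership claims.

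\textbf{Step 3: weak derivatives (statement (3) and membership in $W^{2,1}$).} This is the main obstacle. Pointwise integrability must be upgraded to the identity of distributional derivatives across the points of $Z(f)$. Fix $\varphi\in C_c^\infty$ and excise the sets $E_\varepsilon=\{|f|<\varepsilon\}$ localized near $\operatorname{supp}\varphi$, whose boundaries are the level curves $\Gamma_\varepsilon$. Green's formula on $\{|f|>\varepsilon\}$ produces two boundary terms:
\[
\int_{\Gamma_\varepsilon}\varphi\,\partial_n v\,ds
\qquad\text{and}\qquad
\int_{\Gamma_\varepsilon} v\,\partial_n\varphi\,ds.
\]

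\emph{First term.} On $\Gamma_\varepsilon$ we have $|\nabla v| = 2|f'|/\bigl(\varepsilon L_\varepsilon(\log L_\varepsilon)^2\bigr)$ with $L_\varepsilon=-2\log\varepsilon$. By Theorem \ref{thm3.2} (applied after rescaling to a disc containing the support),
\[
\int_{\Gamma_\varepsilon}|\nabla v|\,ds \le \frac{C\varepsilon}{\varepsilon L_\varepsilon(\log L_\varepsilon)^2} \to 0.
\]

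\emph{Second term.} This is bounded by $\|\nabla\varphi\|_\infty\,\sup_{\Gamma_\varepsilon}|v|\,\mathcal H^1(\Gamma_\varepsilon)\to0$ by Corollary \ref{c01}.

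Since $g_v\in L^1$, dominated convergence finishes (3). The same argument applied to first-order identities, namely $\int v\,\partial_j\varphi = -\int \partial_j v\,\varphi$ and $\int \partial_i v\,\partial_j\varphi = -\int \partial_{ij}v\,\varphi$, has boundary terms bounded by
\[
\sup_{\Gamma_\varepsilon}|v|\cdot\mathcal H^1(\Gamma_\varepsilon)
\qquad\text{and}\qquad
\int_{\Gamma_\varepsilon}|\nabla v|\,ds,
\]
both of which tend to zero. This shows that the pointwise second derivatives are the weak ones, which gives $v\in W^{2,1}_{\mathrm{loc}}$.

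\textbf{Step 4: statement (2).} This follows from Step 1, since $|g_v|\lesssim|\nabla^2 v|\in L^1_{\mathrm{loc}}$. Alternatively, via the coarea formula and \eqref{eq:sublevel-energy-asymptotic} one gets the bound $\int d\varepsilon/\bigl(\varepsilon L^2(\log L)^2\bigr)<\infty$, which avoids the local factorization.
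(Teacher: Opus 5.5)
Your argument is correct, but it extends the identities across $Z(f)$ by a different mechanism from the paper's.

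The paper's proof of Theorem~\ref{th:maintheorem2} uses no level-set geometry. The weak first derivatives come from the $L^2$-removability Lemma~\ref{lm:ruojieinonedimension}, which applies since $v$ is bounded. Both $\Delta v=g_v$ and the weak identity for $\partial_z^2v$ come from Lemma~\ref{lm:ruojieinonedimension2}, whose cutoff estimate $r^{-2}\int_{|z|<r}|v|\,dA\to0$ needs only continuity of $v$ and $v=0$ on $Z(f)$.

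You instead run, for every identity ($v\to\partial_jv$, $\partial_iv\to\partial_{ij}v$, and $\Delta$), the Green's-formula argument the paper uses only for $u$ in Theorem~\ref{th:maintheorem1}(3). You bound the boundary terms via Theorem~\ref{thm3.2} and Corollary~\ref{c01}. This is uniform and does not rely on continuity, so it would work equally for unbounded profiles such as $u$. The cost is the quantitative level-set input, plus a partition-of-unity localization to small discs $D\Subset\Omega$ so that Theorem~\ref{thm3.2} applies; a single disc containing $\operatorname{supp}\varphi$ need not lie in $\Omega$.

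Your pointwise analysis is also more direct. Working with $f=(z-p)^kh$ and $L=2k|\log r|+O(1)$ in the original variable avoids the paper's passage to $w=\psi(z)$. It also avoids the accompanying appeal to invariance of $W^{2,1}_{\mathrm{loc}}$ and $W^{2,p}_{\mathrm{loc}}$ under biholomorphic changes of variable.

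Two small corrections, neither of which affects the argument:
\begin{itemize}
\item The correct closed form is $\Delta v=4|f'/f|^2(\log L+2)/\bigl(L^2(\log L)^3\bigr)$.
\item Not every second derivative is $\asymp 1/\bigl(r^2|\log r|(\log|\log r|)^2\bigr)$; for example, $v_{z\bar z}$ is smaller by a factor $1/|\log r|$. You only need this rate as an upper bound for all second derivatives, together with the lower bound for $|v_{zz}|$ that your non-cancellation remark provides.
\end{itemize}
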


In higher dimensions, the proofs in \cite{psww2026} rely heavily on advanced machinery: the \L{}ojasiewicz gradient inequality to control $|\partial f|$ near the singular locus, the coarea formula to relate volume and boundary integrals, and Hironaka's resolution of singularities to reduce general analytic sets to normal crossings. In the one-dimensional setting, our approach bypasses these heavy tools entirely. Since the zeros of a non-trivial holomorphic function in $\mathbb{C}$ are isolated, the analysis becomes strictly local. Near each zero, writing $f(z) = (z-a)^m g(z)$ with $g(a) \neq 0$ allows a local biholomorphic coordinate change that reduces $f$ to a pure monomial $w^m$. Under this transformation, the level sets reduce to circular arcs and the sublevel sets to sectors or disks, allowing their geometric measures and weighted integrals to be computed explicitly. Together with Green's identity and standard removability techniques for Sobolev spaces, this provides an elementary yet sharp proof of the distributional identities.

\medskip
In addition to the endpoint $p = 1$, we construct a class of counterexamples to Calder\'on--Zygmund regularity at the opposite endpoint $p = \infty$, which is of independent interest.

\begin{theorem}\label{Linfty}
Let $U \subset \mathbb{C}$ be a bounded domain, and let $f \in \mathcal{O}(\overline{U})$ be a nonconstant holomorphic function such that $Z(f) \cap U = \{a_1, \dots, a_N\}$ is non-empty ($N \ge 1$). Define the polynomial
\[
P(z) := \prod_{j=1}^N (z-a_j),
\]
and set
\[
u(z) :=
\begin{cases}
P(z)^2 \log |f(z)|^2, & z \in U \setminus Z(f),\\[4pt]
0, & z \in Z(f) \cap U.
\end{cases}
\]
Then the following statements hold:
\begin{enumerate}
    \item $u \in C^1(U)$;
    \item $g \in L^\infty_{\mathrm{loc}}(U)$ and $\Delta u = g$ in the sense of distributions on $U$, where
    \begin{equation}
    g(z) :=
    \begin{cases}
    \Delta u(z), & z \in U \setminus Z(f),\\[4pt]
    0, & z \in Z(f) \cap U;
    \end{cases}
    \end{equation}
    \item $u \notin W^{2,\infty}_{\mathrm{loc}}(U)$.
\end{enumerate}
Consequently, $u$ serves as a one-dimensional counterexample to Calder\'on--Zygmund estimates at $p = \infty$.
\end{theorem}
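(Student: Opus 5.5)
The plan is to work locally near each zero $a=a_j$ with explicit Wirtinger calculus; away from $Z(f)$ the function $u$ is real-analytic and there is nothing to prove. Near $a$ I will write $f(z)=(z-a)^k h(z)$ with $h(a)\neq 0$ and $P(z)=(z-a)Q(z)$ with $Q(a)\neq 0$, so that $f'/f = k/(z-a) + h'/h$ with $h'/h$ holomorphic near $a$. On $U\setminus Z(f)$, using $\log|f|^2=\log f+\log\bar f$ locally, I will compute
\[
\partial_z u = 2PP'\log|f|^2 + P^2\,\frac{f'}{f},\qquad
\partial_{\bar z} u = P^2\,\overline{\left(\frac{f'}{f}\right)},
\]
\[
\Delta u = 4\,\partial_z\partial_{\bar z}u = 8\,P P'\,\overline{\left(\frac{f'}{f}\right)},
\qquad
\partial_z^2 u = 2\bigl(P'^2+PP''\bigr)\log|f|^2 + 4PP'\frac{f'}{f} + P^2\Bigl(\frac{f'}{f}\Bigr)'.
\]
The formula for $\Delta u$ uses that $\partial_z$ kills the antiholomorphic factor $\overline{f'/f}$.

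\textbf{Part (1).} Near $a$ we have $|u|\le C|z-a|^2\,|\log|z-a||$, which is $o(|z-a|)$. Hence $u$ is differentiable at $a$ with zero derivative. Moreover $P^2 f'/f = O(|z-a|)$ and $PP'\log|f|^2=O(|z-a|\log|z-a|)$, so both $\partial_z u$ and $\partial_{\bar z}u$ tend to $0$ at $a$. This gives $u\in C^1(U)$.

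\textbf{Part (2).} Boundedness is immediate from $PP'\,\overline{f'/f} = Q(Q+(z-a)Q')\bigl(k\,\tfrac{z-a}{\overline{z-a}}+(z-a)\overline{h'/h}\bigr)$, which is bounded near $a$. So $g\in L^\infty_{\rm loc}$, although it is discontinuous at $a$. For the distributional identity, take $\varphi\in C_c^\infty(U)$ and apply Green's formula on $U\setminus\bigcup_j \overline{D(a_j,\delta)}$. The domain integral converges to $\int g\varphi$ because $g$ is bounded. The boundary terms are of the form
\[
\int_{|z-a_j|=\delta}\bigl(\varphi\,\partial_n u - u\,\partial_n\varphi\bigr)\,ds
= O\Bigl(\delta\sup_{|z-a_j|=\delta}\bigl(|\nabla u|+|u|\bigr)\Bigr)\to 0 .
\]
Hence $\Delta u=g$ in $\mathcal D'(U)$.

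\textbf{Part (3).} This is the step I expect to need the most care. I must first identify the weak second derivatives with the classical ones off $Z(f)$, and only then show that they are unbounded. For the identification, note that $\partial_z u$ is continuous on $U$ and smooth off a finite set. Its classical derivatives are $O(|\log|z-a||)$ near $a$, hence locally integrable. The same excision argument then works: the boundary terms are $O(\delta\sup|\partial_z u|)\to 0$. So the weak derivative $\partial_z^2 u$ equals the classical expression above as an $L^1_{\rm loc}$ function.

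For unboundedness, I examine the three terms in $\partial_z^2 u$ near $a$:
\begin{itemize}
\item the term $4PP'f'/f$ is bounded;
\item the term $P^2(f'/f)' = (z-a)^2Q^2\bigl(-k(z-a)^{-2}+(h'/h)'\bigr)$ is bounded;
\item in the first term, $PP''\log|f|^2\to 0$, while $2P'(a)^2\log|f|^2 = 2Q(a)^2\bigl(k\log|z-a|^2+O(1)\bigr)\to\infty$ in modulus as $z\to a$.
\end{itemize}
Thus $\partial_z^2 u\notin L^\infty$ near $a$. Since $\partial_z^2 u=\tfrac14(u_{xx}-u_{yy}-2iu_{xy})$, at least one real second weak derivative is not essentially bounded, so $u\notin W^{2,\infty}_{\rm loc}(U)$. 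Combined with $\Delta u=g\in L^\infty_{\rm loc}$, this is the claimed failure of Calder\'on--Zygmund estimates at $p=\infty$.
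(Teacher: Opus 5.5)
Your proposal is correct and follows the paper's proof essentially step for step. You use the same local factorizations $f=(z-a)^k h$, $P=(z-a)Q$ and the same Wirtinger formulas, giving $\Delta u = 8PP'\,\overline{f'/f}$, which is bounded near each $a_j$; Green's identity with small disks excised for (2); and the same $2kQ(a)^2\log|z-a|^2+O(1)$ blow-up of $\partial_z^2u$ for (3). Your extra step identifying the weak $\partial_z^2 u$ with the classical one across $Z(f)$ is valid but not needed (the paper omits it): if $u$ were in $W^{2,\infty}_{\mathrm{loc}}$, its weak derivative would already have to agree a.e. with the classical one on the full-measure open set $U\setminus Z(f)$.
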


The quadratic power in the prefactor $P(z)^2$ is critical: replacing $P^2$ with $P^q$ for any integer $q \ge 3$ would restore full $W^{2,\infty}_{\mathrm{loc}}(U)$ regularity, demonstrating that $q = 2$ is precisely the threshold for singularity formation.

\medskip
The remainder of this paper is organized as follows. In Section \ref{sec2}, we establish the sharp asymptotic formulas and geometric estimates for the level and sublevel sets of holomorphic functions (Theorems \ref{thm3.2} and \ref{thm3.3}). In Section \ref{sec3}, we prove the universal failure of Calder\'on--Zygmund estimates at $p = 1$ (Theorems \ref{th:maintheorem1} and \ref{th:maintheorem2}). Finally, Section \ref{sec4} is devoted to the construction and analysis of the counterexample at the $p = \infty$ endpoint (Theorem \ref{Linfty}).

\section{Asymptotics of energy estimates of holomorphic functions}\label{sec2}
While energy estimates for (sub)level sets of holomorphic functions were obtained in \cite{psww2026}, the fact that zeros are isolated in one complex dimension allows us to sharpen these bounds into asymptotic formulas. In particular, we derive the exact leading terms of the level-set integral and the sublevel-set energy, showing that interior and boundary zeros contribute their full and half multiplicities, respectively. To this end, we first establish an elementary geometric lemma bounding the length and area of small circular sections of planar domains with $C^1$ boundary.

%Energy estimates for the level and sublevel sets of holomorphic functions were established in \cite{psww2026}. In the one-dimensional setting of the present paper, the isolated nature of zeros enables us to sharpen these estimates into precise asymptotic formulas. Specifically, we explicitly compute the leading-order asymptotics for both the level-set gradient integral and the sublevel-set energy. These formulas reveal that each interior zero contributes its full multiplicity to the leading coefficient, whereas each boundary zero contributes exactly half its multiplicity.% In the one-dimensional setting considered in this paper, Because zeros are isolated in one complex dimension, these estimates can be sharpened to asymptotic formulas. More precisely, we compute the leading terms of the level-set integral and the sublevel-set energy. These formulas show that each interior zero contributes its full multiplicity, whereas each boundary zero contributes one half of its multiplicity. 
%We first prove a   geometric estimate for the length and area of small circular sections of a planar domain with $C^1$ boundary.

\begin{lemma}\label{lem3.1}
Let $\Omega \subset \mathbb{C}$ be a domain with $0 \in \partial\Omega$, and suppose that $\partial\Omega$ is of class $C^1$ near $0$. Define
\[
L_{\Omega}(\rho) := \mathcal{H}^1\bigl(\Omega \cap \{|w|=\rho\}\bigr)
\qquad \text{and} \qquad
A_{\Omega}(\rho) := \operatorname{Area}\bigl(\Omega \cap \{|w|<\rho\}\bigr).
\]
Then, as $\rho \to 0^+$,
\begin{equation}\label{eq:half-domain-asymptotics}
L_{\Omega}(\rho) = \pi\rho + o(\rho)
\qquad \text{and} \qquad
A_{\Omega}(\rho) = \frac{\pi}{2}\rho^2 + o(\rho^2).
\end{equation}
\end{lemma}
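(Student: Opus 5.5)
The plan is to flatten the boundary near $0$ by a rotation and a $C^1$ graph representation, and then to show that for small $\rho$ the circle $\{|w|=\rho\}$ meets $\Omega$ in an arc whose angular length tends to $\pi$. Integrating the arc lengths over radii then gives the area asymptotic.

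\textbf{Normalization.} After a rotation I assume the inner normal of $\partial\Omega$ at $0$ points in the direction $i$. Since $\partial\Omega$ is $C^1$ near $0$, there are $\delta>0$ and a $C^1$ function $h:(-\delta,\delta)\to\mathbb{R}$ with $h(0)=h'(0)=0$ such that
\[
\Omega\cap Q_\delta=\{x+iy\in Q_\delta:\ y>h(x)\},\qquad Q_\delta:=(-\delta,\delta)\times(-\delta,\delta).
\]
Here I am using the standard meaning of "$C^1$ boundary": $\Omega$ lies locally on one side of its boundary. I also set $\eta(r):=\sup_{|x|\le r}|h'(x)|$, so that $\eta(r)\to0$ as $r\to0$, and by the mean value theorem $|h(x)|\le \eta(|x|)\,|x|$.

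\textbf{Angular sandwich.} Fix $\rho<\delta/2$, so that $\{|w|\le\rho\}\subset Q_\delta$. Write $w=\rho e^{i\theta}$. If $\sin\theta>\eta(\rho)$, then $\rho\sin\theta>\eta(\rho)\rho\ge |h(\rho\cos\theta)|$, so $w\in\Omega$. If $\sin\theta<-\eta(\rho)$, the same bound gives $w\notin\Omega$. Let $\alpha(\rho):=\arcsin\eta(\rho)\to0$. Then $\Omega\cap\{|w|=\rho\}$ contains the arc $\theta\in(\alpha,\pi-\alpha)$ and is contained in the arc $\theta\in(-\alpha,\pi+\alpha)$. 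Consequently
\[
\rho(\pi-2\alpha(\rho))\le L_\Omega(\rho)\le \rho(\pi+2\alpha(\rho)),
\]
which gives $L_\Omega(\rho)=\pi\rho+o(\rho)$.

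\textbf{Area.} In polar coordinates, $A_\Omega(\rho)=\int_0^\rho L_\Omega(t)\,dt$, and $L_\Omega$ is measurable by Fubini. Because $\alpha$ is nondecreasing, the bounds above give
\[
\bigl|A_\Omega(\rho)-\tfrac{\pi}{2}\rho^2\bigr|\le 2\alpha(\rho)\int_0^\rho t\,dt=\alpha(\rho)\rho^2=o(\rho^2).
\]

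\textbf{Main obstacle.} The only delicate step is the normalization. One must check that the $C^1$ hypothesis really yields the one-sided graph representation, so that $\Omega$ is exactly the supergraph inside $Q_\delta$ and not both sides of the curve. One must also check that the uniform bound $|h(x)|\le\eta(|x|)|x|$ holds on the whole disc of radius $\rho$. Once these are settled, everything else is the elementary sandwich argument above.
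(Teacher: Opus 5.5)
Your proof is correct and follows essentially the same route as the paper: rotate so that $\partial\Omega$ is a one-sided $C^1$ graph over the real axis, trap the angular set $\{\theta:\rho e^{i\theta}\in\Omega\}$ between $\{\sin\theta>c(\rho)\}$ and $\{\sin\theta\ge -c(\rho)\}$ with $c(\rho)\to0$, and integrate in polar coordinates. The only difference is cosmetic. You bound $|h(x)|$ through the mean value theorem with $\eta(r)=\sup_{|x|\le r}|h'(x)|$, whereas the paper uses $\delta(\rho)=\sup_{|x|\le\rho}|\varphi(x)|=o(\rho)$ directly, and your monotonicity remark yields the explicit area error bound $\alpha(\rho)\rho^2$.
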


\begin{proof}
Up to a rotation, we may assume that the tangent line to $\partial\Omega$ at $0$ is the real axis, and that $\Omega$ locally lies in the upper half-plane. Since $\partial\Omega$ is of class $C^1$, there exist $r_0 > 0$ and a $C^1$ function $\varphi \colon (-r_0, r_0) \to \mathbb{R}$ with $\varphi(0) = \varphi'(0) = 0$ such that
\[
\Omega \cap B(0, r_0) = \{x + iy \in B(0, r_0) : y > \varphi(x)\}.
\]
Set $\delta(\rho) := \sup_{|x| \le \rho} |\varphi(x)| = o(\rho)$ as $\rho \to 0^+$. For $0 < \rho < r_0$, let $E_\rho := \{\theta \in [0, 2\pi) : \rho e^{i\theta} \in \Omega\}$, so that $L_\Omega(\rho) = \rho |E_\rho|$. 

Writing $w = \rho e^{i\theta} = x + iy$, the condition $y > \delta(\rho)$ implies $y > \varphi(x)$, whereas $y < -\delta(\rho)$ implies $y < \varphi(x)$. Consequently,
\[
\left\{\theta \in [0, 2\pi) : \sin\theta > \frac{\delta(\rho)}{\rho}\right\}
\subset E_\rho \subset
\left\{\theta \in [0, 2\pi) : \sin\theta \ge -\frac{\delta(\rho)}{\rho}\right\}.
\]
Since $\delta(\rho)/\rho \to 0$ as $\rho \to 0^+$, for $\rho$ sufficiently small we have $0 \le \delta(\rho)/\rho < 1$. Hence
$$
\left|\left\{ \theta \in [0, 2\pi) : \sin \theta > \frac{\delta(\rho)}{\rho} \right\}\right| = \pi - 2 \arcsin \left( \frac{\delta(\rho)}{\rho} \right),
$$
and
$$
\left|\left\{ \theta \in [0, 2\pi) : \sin \theta \ge -\frac{\delta(\rho)}{\rho} \right\}\right| = \pi + 2 \arcsin \left( \frac{\delta(\rho)}{\rho} \right).
$$
It follows that
\begin{equation}\label{eq:angular-measure-squeeze}
\pi-2\arcsin\!\left(\frac{\delta(\rho)}{\rho}\right)
\leq |E_\rho| \leq
\pi+2\arcsin\!\left(\frac{\delta(\rho)}{\rho}\right).
\end{equation}
Since $\delta(\rho)/\rho\to0$, it follows from
\eqref{eq:angular-measure-squeeze} that
\[
|E_\rho|=\pi+o(1),
\qquad \rho\to0^+.
\]
Consequently,
\[
L_\Omega(\rho)=\rho|E_\rho|=\pi\rho+o(\rho).
\]
		
For the area estimate, using polar coordinates gives
\[
A_{\Omega}(\rho)=\int_{0}^{\rho} r\,|E_{r}|\,dr.
\]
Since $|E_{r}|=\pi+o(1)$ as $r\to0^{+}$, we obtain
\[
A_{\Omega}(\rho)
=\int_{0}^{\rho} r(\pi+o(1))\,dr
=\frac{\pi}{2}\rho^{2}+o(\rho^{2}).
\]
	\end{proof}

We now apply this geometric lemma to the small level sets of a holomorphic function. Near an interior zero, the level set is biholomorphically equivalent to a full circle. Near a boundary zero, it is equivalent to the intersection of a small circle with a $C^1$ half-domain. This gives the refined asymptotic formula in Theorem \ref{thm3.2}.

\begin{proof}[Proof of Theorem \ref{thm3.2}]
Since $U$ is connected, $\overline{\mathbb{D}}$ is compact, and $f \not\equiv 0$, the identity theorem implies that $f$ has only finitely many zeros in $\overline{\mathbb{D}}$. We partition these zeros into the interior zeros $p_1, \dots, p_m \in \mathbb{D}$ and the boundary zeros $q_1, \dots, q_r \in \partial\mathbb{D}$.
%Since \(f\) is holomorphic on a neighborhood of \(\overline{\mathbb{D}}\) and $f \not\equiv 0$, it has only finitely many zeros in \(\overline{\mathbb{D}}\). These zeros split into two classes: the interior zeros  $p_{1}, \dots, p_{m} \in \mathbb{D}$, and the boundary zeros $q_{1}, \dots, q_{r} \in \partial\mathbb{D}$.

Choose pairwise disjoint open disks $B_1, \dots, B_m \Subset \mathbb{D}$ and $W_1, \dots, W_r \Subset U$, centered at $p_j$ and $q_\nu$ respectively, such that all $\{B_j\}_{j=1}^m$ and $\{W_\nu\}_{\nu=1}^r$ are mutually disjoint, and $f$ has no zeros on$$\bigcup_{j=1}^m \bigl(\overline{B_j} \setminus \{p_j\}\bigr) \cup \bigcup_{\nu=1}^r \bigl(\overline{W_\nu} \setminus \{q_\nu\}\bigr).$$The set$$K := \overline{\mathbb{D}} \setminus \Biggl( \bigcup_{j=1}^m B_j \cup \bigcup_{\nu=1}^r W_\nu \Biggr)$$is compact, and by construction, $f$ nowhere vanishes on $K$. Setting $\varepsilon_0 := \inf_{z \in K} \vert{}f(z)\vert{} > 0$, it follows that for every $\varepsilon \in (0, \varepsilon_0)$, the level set $\Gamma_\varepsilon$ and the sublevel set $E_\varepsilon$ are entirely contained in $\bigl(\bigcup_{j=1}^m B_j\bigr) \cup \bigl(\bigcup_{\nu=1}^r (W_\nu \cap \mathbb{D})\bigr)$. Consequently, the integrals in \eqref{eq:level-set-asymptotic} and \eqref{eq:sublevel-energy-asymptotic} decompose into the sum of the localized contributions around the interior and boundary zeros.

%For each interior zero \(p_j\), choose a small disk \(B_j \Subset U\) such that \(p_j \in B_j\), the disks \(B_j\) are pairwise disjoint, and \(f\) has no zeros on \(\overline{B_j} \setminus \{p_j\}\). For each boundary zero \(q_\nu\), choose a small disk \(W_\nu \Subset U\) such that \(q_\nu \in W_\nu\), the disks \(W_\nu\) are pairwise disjoint and disjoint from all \(B_j\), and \(f\) has no zeros on \(\overline{W_\nu} \setminus \{q_\nu\}\). Set
%$$
%K := \overline{\mathbb{D}} \setminus \left( \bigcup_{j=1}^m B_j \cup \bigcup_{\nu=1}^r W_\nu \right).
%$$
%Then $K$ is compact, $f$ has no zeros on $K$. Hence, there exists %$\varepsilon_0 > 0$ such that
%$
%|f(z)| \ge \varepsilon_0,
%$
%on \(K\). Therefore, for $0 < \varepsilon < \varepsilon_0$, the sets $\Gamma_\varepsilon$ and $E_\varepsilon$ are contained in
%$
%\biggl(\bigcup_{j=1}^m B_j \biggr)\cup \biggl(\bigcup_{\nu=1}^r W_\nu\biggr).
%$
%Thus, the desired integrals decompose into the sum of the contributions from the interior zero neighborhoods and the boundary zero neighborhoods.

\medskip
\noindent\textbf{Case 1: Interior zeros.} Fix $j \in \{1, \dots, m\}$. In the simply connected domain $B_j$, we may write $f(z) = (z - p_j)^{k_j} g_j(z)$ with non-vanishing $g_j \in \mathcal{O}(B_j)$. Selecting a holomorphic branch $h_j := g_j^{1/k_j} \in \mathcal{O}(B_j)$, the mapping$$\psi_j(z) := (z - p_j) h_j(z)$$satisfies $f(z) = \psi_j(z)^{k_j}$ and $\psi_j'(p_j) = h_j(p_j) \neq 0$. Shrinking $B_j$ if necessary, $\psi_j$ defines a biholomorphism onto a neighborhood of $0$. Setting $w = \psi_j(z)$ and $\rho_j := \varepsilon^{1/k_j}$, the pull-back metric relations yield$$ds_w = \vert{}\psi_j'(z)\vert{}\,ds_z, \qquad dA_w = \vert{}\psi_j'(z)\vert{}^2\,dA_z.$$Because $f'(z) = k_j w^{k_j - 1} \psi_j'(z)$, we have$$\vert{}f'(z)\vert{}\,ds_z = k_j \vert{}w\vert{}^{k_j-1}\,ds_w, \qquad \vert{}f'(z)\vert{}^2\,dA_z = k_j^2 \vert{}w\vert{}^{2k_j-2}\,dA_w.$$Under the coordinate $w$, the intersection $\Gamma_\varepsilon \cap B_j$ corresponds to the full circle $\{\vert{}w\vert{} = \rho_j\}$, while $E_\varepsilon \cap B_j$ corresponds to the disk $\{\vert{}w\vert{} < \rho_j\}$. Direct integration yields
\begin{equation}\label{eq:interior-zero-contributions}
\begin{aligned}\int_{\Gamma_\varepsilon \cap B_j} |f'(z)|,ds_z &= k_j \rho_j^{k_j-1} \cdot 2\pi \rho_j = 2\pi k_j \varepsilon, \\int_{E_\varepsilon \cap B_j} |f'(z)|^2,dA_z &= k_j^2 \int_0^{\rho_j} 2\pi t \cdot t^{2k_j-2},dt = \pi k_j \varepsilon^2.
\end{aligned}
\end{equation}
\medskip
\noindent\textbf{Case 2: Boundary zeros.}
Fix $\nu \in \{1, \dots, r\}$. Similarly, on $W_\nu$ we factorize $f(z) = (z - q_\nu)^{\ell_\nu} g_\nu(z)$ with non-vanishing $g_\nu \in \mathcal{O}(W_\nu)$, and define a local coordinate $\psi_\nu(z) := (z - q_\nu) g_\nu(z)^{1/\ell_\nu}$ such that $f(z) = \psi_\nu(z)^{\ell_\nu}$ and $\psi_\nu'(q_\nu) \neq 0$. Shrinking $W_\nu$ if necessary, $\psi_\nu$ is a biholomorphism from $W_\nu$ onto an open neighborhood of the origin. Consider the domain $\Omega_\nu := \psi_\nu(W_\nu \cap \mathbb{D})$. Since $q_\nu \in \partial\mathbb{D}$ and $\partial\mathbb{D}$ is real-analytic, the boundary $\partial\Omega_\nu$ is a $C^1$-smooth curve near $0 = \psi_\nu(q_\nu)$ whose tangent line at $0$ splits the tangent space into a half-plane. Setting $w = \psi_\nu(z)$ and $\rho_\nu := \varepsilon^{1/\ell_\nu}$, the set $\Gamma_\varepsilon \cap W_\nu$ transforms into $\Omega_\nu \cap \{\vert{}w\vert{} = \rho_\nu\}$, and $E_\varepsilon \cap W_\nu$ transforms into $\Omega_\nu \cap \{\vert{}w\vert{} < \rho_\nu\}$. For the level-set integral, using \eqref{eq:half-domain-asymptotics} we obtain$$\operatorname{length}\bigl( \Omega_\nu \cap \{\vert{}w\vert{} = \rho_\nu\} \bigr) = \pi \rho_\nu + o(\rho_\nu) \quad \text{as } \rho_\nu \to 0^+.$$
Therefore,
\begin{equation}\label{eq:boundary-level-contribution}
\int_{\Gamma_\varepsilon \cap W_\nu} |f'(z)|,ds_z = \ell_\nu \rho_\nu^{\ell_\nu - 1} \operatorname{length}\bigl(\Omega_\nu \cap {|w| = \rho_\nu}\bigr) = \pi \ell_\nu \varepsilon + o(\varepsilon).
\end{equation}
For the sublevel-set integral, let $w = t e^{i\theta}$. For $t > 0$ sufficiently small, define the angular slice
$$
\Theta_\nu(t) := \bigl\{\theta \in [0, 2\pi) : t e^{i\theta} \in \Omega_\nu\bigr\}.
$$
By Lemma~\ref{lem3.1}, the angular measure satisfies $\vert{}\Theta_\nu(t)\vert{} = \pi + o(1)$ as $t \to 0^+$. Passing to polar coordinates gives
$$  
\begin{aligned} 
\int_{E_\varepsilon \cap W_\nu} \vert{}f'(z)\vert{}^2\,dA_z &= \ell_\nu^2 \int_{\Omega_\nu \cap \{\vert{}w\vert{} < \rho_\nu\}} \vert{}w\vert{}^{2\ell_\nu - 2}\,dA_w \\ &= \ell_\nu^2 \int_0^{\rho_\nu} t^{2\ell_\nu - 1} \vert{}\Theta_\nu(t)\vert{}\,dt \\ &= \ell_\nu^2 \int_0^{\rho_\nu} t^{2\ell_\nu - 1} \bigl(\pi + o(1)\bigr)\,dt \\ &= \frac{\pi}{2} \ell_\nu \rho_\nu^{2\ell_\nu} + o(\rho_\nu^{2\ell_\nu}) \\ &= \frac{\pi}{2}\ell_\nu \varepsilon^2 + o(\varepsilon^2). 
\end{aligned}
$$
Hence,
\begin{equation}\label{eq:boundary-energy-contribution}
\int_{E_\varepsilon \cap W_\nu} |f'(z)|^2,dA_z = \frac{\pi}{2} \ell_\nu \varepsilon^2 + o(\varepsilon^2).
\end{equation}
\medskip
Finally, summing \eqref{eq:interior-zero-contributions} over $j = 1, \dots, m$ and \eqref{eq:boundary-level-contribution} over $\nu = 1, \dots, r$ yields \eqref{eq:level-set-asymptotic}. Likewise, summing the second identity in \eqref{eq:interior-zero-contributions} and \eqref{eq:boundary-energy-contribution} over all respective zeros yields \eqref{eq:sublevel-energy-asymptotic}.

\end{proof}

		$
%		Then
		$

We next prove Theorem \ref{thm3.3}.

\begin{proof}[Proof of Theorem \ref{thm3.3}]
We retain the notation and localization setup from the proof of Theorem~\ref{thm3.2}. Let $B_1, \dots, B_m \Subset \mathbb{D}$ and $W_1, \dots, W_r \Subset U$ be mutually disjoint open neighborhoods of the interior zeros $p_1, \dots, p_m$ and boundary zeros $q_1, \dots, q_r$, respectively, chosen such that $f$ has no other zeros in $\bigcup_{j=1}^m \overline{B_j} \cup \bigcup_{\nu=1}^r \overline{W_\nu}$. By compactness and non-vanishing of $f$ on the complement, there exists $\varepsilon_0 > 0$ such that for all $\varepsilon \in (0, \varepsilon_0)$,
\[
\Gamma_\varepsilon \cup E_\varepsilon \subset \bigcup_{j=1}^m B_j \cup \bigcup_{\nu=1}^r \bigl(W_\nu \cap \overline{\mathbb{D}}\bigr).
\]
It therefore suffices to establish localized upper bounds in each neighborhood.

\medskip
\noindent\textbf{Case 1: Interior zeros.} 
Fix $j \in \{1, \dots, m\}$. As established in the proof of Theorem~\ref{thm3.2}, after shrinking $B_j$ if necessary, there exists a biholomorphism $\psi_j \colon B_j \to \psi_j(B_j) \subset \mathbb{C}$ satisfying $\psi_j(p_j) = 0$ and $f(z) = \psi_j(z)^{k_j}$. For $\varepsilon > 0$, set $\rho_j := \varepsilon^{1/k_j}$. Under the local coordinate $w = \psi_j(z)$, we have
\[
\Gamma_\varepsilon \cap B_j = \psi_j^{-1}(\{|w| = \rho_j\}), \qquad E_\varepsilon \cap B_j = \psi_j^{-1}(\{|w| < \rho_j\}).
\]
Since $(\psi_j^{-1})'$ is continuous near the origin, there exist constants $\delta_j > 0$ and $M_j > 0$ such that $|(\psi_j^{-1})'(w)| \le M_j$ whenever $|w| < \delta_j$. Thus, for all $\varepsilon \in (0, \delta_j^{k_j})$, the change of variables $z = \psi_j^{-1}(w)$ gives
\[
\mathcal{H}^1(\Gamma_\varepsilon \cap B_j) = \int_{|w|=\rho_j} |(\psi_j^{-1})'(w)|\,ds_w \le M_j \int_{|w|=\rho_j} ds_w = 2\pi M_j \rho_j = C_j \varepsilon^{1/k_j},
\]
where $C_j := 2\pi M_j$. Similarly, for the area of the sublevel set,
\[
\operatorname{Area}(E_\varepsilon \cap B_j) = \int_{|w|<\rho_j} |(\psi_j^{-1})'(w)|^2\,dA_w \le M_j^2 \int_{|w|<\rho_j} dA_w = \pi M_j^2 \rho_j^2 = C_j' \varepsilon^{2/k_j},
\]
where $C_j' := \pi M_j^2$.

\medskip
\noindent\textbf{Case 2: Boundary zeros.} 
Fix $\nu \in \{1, \dots, r\}$. Shrinking $W_\nu$ if necessary, there exists a biholomorphism $\psi_\nu \colon W_\nu \to \psi_\nu(W_\nu)$ such that $\psi_\nu(q_\nu) = 0$ and $f(z) = \psi_\nu(z)^{\ell_\nu}$. Setting $\Omega_\nu := \psi_\nu(W_\nu \cap \mathbb{D})$ and $\rho_\nu := \varepsilon^{1/\ell_\nu}$, we identify
\[
\Gamma_\varepsilon \cap W_\nu = \psi_\nu^{-1}\bigl(\Omega_\nu \cap \{|w| = \rho_\nu\}\bigr), \qquad E_\varepsilon \cap W_\nu = \psi_\nu^{-1}\bigl(\Omega_\nu \cap \{|w| < \rho_\nu\}\bigr).
\]
By Lemma~\ref{lem3.1}, there exist $\delta_\nu > 0$ and geometric constants $\alpha_\nu, \beta_\nu > 0$ such that for all $\rho \in (0, \delta_\nu)$,
\[
\operatorname{length}\bigl(\Omega_\nu \cap \{|w| = \rho\}\bigr) \le \alpha_\nu \rho \qquad \text{and} \qquad \operatorname{Area}\bigl(\Omega_\nu \cap \{|w| < \rho\}\bigr) \le \beta_\nu \rho^2.
\]
By the continuity of $(\psi_\nu^{-1})'$ on a neighborhood of $0$, we may choose $M_\nu > 0$ such that $|(\psi_\nu^{-1})'(w)| \le M_\nu$ for all $|w| < \delta_\nu$ (after shrinking $\delta_\nu$ if necessary). Consequently, for all $\varepsilon \in (0, \delta_\nu^{\ell_\nu})$,
\[
\mathcal{H}^1(\Gamma_\varepsilon \cap W_\nu) = \int_{\Omega_\nu \cap \{|w|=\rho_\nu\}} |(\psi_\nu^{-1})'(w)|\,ds_w \le M_\nu \operatorname{length}\bigl(\Omega_\nu \cap \{|w| = \rho_\nu\}\bigr) \le C_\nu \varepsilon^{1/\ell_\nu},
\]
where $C_\nu := M_\nu \alpha_\nu$. In the same manner,
\[
\operatorname{Area}(E_\varepsilon \cap W_\nu) = \int_{\Omega_\nu \cap \{|w|<\rho_\nu\}} |(\psi_\nu^{-1})'(w)|^2\,dA_w \le M_\nu^2 \operatorname{Area}\bigl(\Omega_\nu \cap \{|w| < \rho_\nu\}\bigr) \le C_\nu' \varepsilon^{2/\ell_\nu},
\]
where $C_\nu' := M_\nu^2 \beta_\nu$.

\medskip
Finally,
taking $\varepsilon > 0$ sufficiently small so that $\varepsilon < \min\bigl\{\varepsilon_0, \min_j \delta_j^{k_j}, \min_\nu \delta_\nu^{\ell_\nu}\bigr\}$, summing the respective localized estimates over all $j = 1, \dots, m$ and $\nu = 1, \dots, r$ yields the desired bounds 
 \begin{equation}\label{eq:global-geometric-bounds}
 \begin{aligned}
 \mathcal H^1(\Gamma_\varepsilon)
 &\leq
 \sum_{j=1}^m C_j\varepsilon^{1/k_j}
 +\sum_{\nu=1}^r C_\nu\varepsilon^{1/\ell_\nu},\\
 \operatorname{Area}(E_\varepsilon)
 &\leq
 \sum_{j=1}^m C_j'\varepsilon^{2/k_j}
 +\sum_{\nu=1}^r C_\nu'\varepsilon^{2/\ell_\nu}.
 \end{aligned}
 \end{equation}
% This proves the asserted geometric bounds.
\end{proof}

We conclude this section with the proof of Corollary \ref{c01}.
\begin{proof}[Proof of Corollary \ref{c01}]

By \eqref{eq:global-geometric-bounds}, for all sufficiently small
$\varepsilon>0$,
\[
\mathcal H^1(\Gamma_\varepsilon)
\leq
\sum_{j=1}^m C_j\varepsilon^{1/k_j}
+\sum_{\nu=1}^r C_\nu\varepsilon^{1/\ell_\nu},
\]
and the analogous estimate holds for
$\operatorname{Area}(E_\varepsilon)$.
Since $k_{j} \leq \kappa$ and $\ell_{
\nu} \leq \kappa$, for $0 < \varepsilon < 1$ we have
\[
\varepsilon^{1/k_{j}} \leq \varepsilon^{1/\kappa}, \qquad \varepsilon^{1/\ell_{
\nu}} \leq \varepsilon^{1/\kappa},
\]
and similarly,
\[
\varepsilon^{2/k_{j}} \leq \varepsilon^{2/\kappa}, \qquad \varepsilon^{2/\ell_{
\nu}} \leq \varepsilon^{2/\kappa}.
\]
Thus, after increasing the positive constant $C$ if necessary, we have
\[
\mathcal{H}^{1}(\Gamma_{\varepsilon}) \leq C \varepsilon^{1/\kappa}, \qquad \operatorname{Area}(E_{\varepsilon}) \leq C \varepsilon^{2/\kappa},
\]
for all sufficiently small $\varepsilon > 0$.
	\end{proof}

\section{Universal counterexamples at the $p = 1$ endpoint }\label{sec3}

This section is devoted to the proofs of Theorems \ref{th:maintheorem1} and \ref{th:maintheorem2}. In the proof of Theorem \ref{th:maintheorem1}, the level-set asymptotics established in  Section 2  serve to control the boundary terms arising from Green's formula, thereby allowing us to extend the classical identity for $u$ across the zero set $Z(f)$ in the distributional sense. The proof of Theorem \ref{th:maintheorem2} subsequently builds upon the first-order regularity of $u$ combined with a second-order removability principle. For clarity of presentation, we begin by assembling several preliminary differential identities and auxiliary results that will be used throughout the proofs.

%This section is devoted to the proofs of Theorem \ref{th:maintheorem1} and Theorem \ref{th:maintheorem2}. The level-set estimates established in Section 2 are used in the proof of Theorem \ref{th:maintheorem1} to control the boundary terms arising from Green's formula and to extend the classical identity for $u$ across $Z(f)$ in the sense of distributions. The proof of Theorem \ref{th:maintheorem2} then builds on the first-order regularity of $u$, together with a second-order removability principle. We begin by collecting the differential identities and the auxiliary results needed in the two proofs.

\subsection{Differential identities and removability results}\label{Differential}

Throughout this section, let $\Omega \subset \mathbb{C}$ and $f \in \mathcal{O}(\Omega)$ satisfy the assumptions of Theorem \ref{th:maintheorem1}. Let $u$ and $v$ be the functions introduced in Theorem \ref{th:maintheorem1}  and Theorem \ref{th:maintheorem2}, respectively. We first record the differential identities for $u$. On $\Omega\setminus Z(f)$, the function $u$ is smooth, and direct
differentiation gives
\begin{equation}\label{eq:u-first-derivatives}
\frac{\partial u}{\partial z}
=\frac{f'}{f\log|f|^2},
\qquad
\frac{\partial u}{\partial\bar z}
=\frac{\overline{f'}}{\overline f\log|f|^2}.
\end{equation}
Differentiating once more, we obtain
\begin{equation}\label{eq:u-pure-second-derivative}
\frac{\partial^2u}{\partial z^2}
=
\frac{f''}{f\log|f|^2}
-\left(\frac{f'}f\right)^2\frac1{\log|f|^2}
-\left(\frac{f'}f\right)^2\frac1{(\log|f|^2)^2},
\qquad
\frac{\partial^2u}{\partial\bar z^2}
=\overline{\frac{\partial^2u}{\partial z^2}}.
\end{equation}
Moreover,
\begin{equation}\label{eq:u-laplacian}
\Delta u
=4\frac{\partial^2u}{\partial z\partial\bar z}
=-4\left|\frac{f'}f\right|^2\frac1{(\log|f|^2)^2}
=-4\left|\frac{\partial u}{\partial z}\right|^2.
\end{equation}

We next turn to $v$. On $\Omega\setminus Z(f)$, direct computation gives
\begin{equation}\label{eq:v-first-derivatives}
\frac{\partial v}{\partial z}
=-\frac{f'}f
\frac1{\log|f|^2\bigl(\log(-\log|f|^2)\bigr)^2},
\qquad
\frac{\partial v}{\partial\bar z}
=-\frac{\overline{f'}}{\overline f}
\frac1{\log|f|^2\bigl(\log(-\log|f|^2)\bigr)^2}.
\end{equation}
Differentiating once more, we obtain
\begin{equation}\label{eq:v-pure-second-derivative}
\begin{aligned}
\frac{\partial^2v}{\partial z^2}
&=-\frac{f''}{f}
  \frac1{\log|f|^2\bigl(\log(-\log|f|^2)\bigr)^2}\\
&\quad+\left(\frac{f'}f\right)^2
  \frac1{\log|f|^2\bigl(\log(-\log|f|^2)\bigr)^2}\\
&\quad+\left(\frac{f'}f\right)^2
  \frac1{(\log|f|^2)^2\bigl(\log(-\log|f|^2)\bigr)^2}\\
&\quad+2\left(\frac{f'}f\right)^2
  \frac1{(\log|f|^2)^2\bigl(\log(-\log|f|^2)\bigr)^3},\\
\frac{\partial^2v}{\partial\bar z^2}
&=\overline{\frac{\partial^2v}{\partial z^2}}.
\end{aligned}
\end{equation}

Finally, the Laplacian is given by
\begin{equation}\label{eq:v-laplacian}
\begin{aligned}
\Delta v
&=4\left|\frac{f'}f\right|^2
\frac1{(\log|f|^2)^2\bigl(\log(-\log|f|^2)\bigr)^2}\\
&+8\left|\frac{f'}f\right|^2
\frac1{(\log|f|^2)^2\bigl(\log(-\log|f|^2)\bigr)^3}.
\end{aligned}
\end{equation}

The identities above hold classically on $\Omega \setminus Z(f)$. To extend them across the isolated zero set in the sense of distributions, we now recall the required removability results.
\begin{lemma} [{\cite[Lemma A.1]{PZ2023}}]
\label{lm:ruojieinonedimension}
Let $f:\D  \rightarrow \C$ and $f\in L^1(\D)$. If $u: \D \rightarrow \C$ with $u\in L^2(\D)$ satisfies
$$
 \overline{\partial} u = f     \    \  \text{in} \ \  \   \D \setminus \{ 0\}
$$
in the sense of distributions,  then
$$
 \overline{\partial} u = f     \    \  \text{in} \ \  \   \D
$$
in the sense of distributions.
\end{lemma}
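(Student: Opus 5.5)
This removability lemma is quoted from \cite{PZ2023}, so a proof only needs the standard cutoff argument. Fix a test function $\phi\in C_c^\infty(\D)$. The goal is
\[
-\int_\D u\,\overline{\partial}\phi\,dA=\int_\D f\phi\,dA .
\]
The hypothesis gives this identity for test functions supported away from $0$, so I will cut $\phi$ off near the origin with a logarithmic cutoff. The logarithmic (rather than linear) cutoff is needed because $u$ is only assumed to be in $L^2$.

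\textbf{Step 1: the cutoff.} For small $\delta>0$, define $\chi_\delta(z)=\eta\bigl(\log(1/|z|)/\log(1/\delta)\bigr)$, where $\eta\in C^\infty(\mathbb R)$ satisfies $\eta=1$ on $(-\infty,1]$ and $\eta=0$ on $[2,\infty)$. Then:
\begin{itemize}
\item $\chi_\delta=1$ for $|z|\ge\delta$;
\item $\chi_\delta=0$ for $|z|\le\delta^2$;
\item $|\overline{\partial}\chi_\delta|\le C/\bigl(|z|\log(1/\delta)\bigr)$ on the annulus $A_\delta=\{\delta^2<|z|<\delta\}$.
\end{itemize}
Consequently
\[
\|\overline{\partial}\chi_\delta\|_{L^2}^2\le \frac{C}{\log^2(1/\delta)}\int_{\delta^2}^{\delta}\frac{2\pi r\,dr}{r^2}=\frac{C'}{\log(1/\delta)}\longrightarrow 0 .
\]

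\textbf{Step 2: apply the hypothesis.} Since $\chi_\delta\phi\in C_c^\infty(\D\setminus\{0\})$, the hypothesis gives
\[
-\int u\,\chi_\delta\,\overline{\partial}\phi-\int u\,\phi\,\overline{\partial}\chi_\delta=\int f\chi_\delta\phi .
\]

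\textbf{Step 3: pass to the limit $\delta\to0$.}
\begin{itemize}
\item The first integral converges to $\int u\,\overline{\partial}\phi$ by dominated convergence, since $u\in L^2\subset L^1_{\mathrm{loc}}$.
\item The right side converges to $\int f\phi$ by dominated convergence, since $f\in L^1$.
\item The middle term is bounded by Cauchy--Schwarz:
\[
\Bigl|\int u\,\phi\,\overline{\partial}\chi_\delta\Bigr|\le\|\phi\|_\infty\,\|u\|_{L^2(A_\delta)}\,\|\overline{\partial}\chi_\delta\|_{L^2}\longrightarrow 0 .
\]
In fact both factors tend to zero, as $\|u\|_{L^2(A_\delta)}\to0$ by absolute continuity of the integral.
\end{itemize}

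The only step needing care is the middle term. With a linear cutoff of width $\delta$ one has $\|\overline{\partial}\chi_\delta\|_{L^2}\sim 1$, which does not tend to zero, although combined with $\|u\|_{L^2(B_\delta)}\to0$ this would also suffice. In either form the decisive point is the two-dimensional scaling: $L^2$ is exactly the critical integrability, with $W^{1,2}$ capacity of a point equal to zero, that makes a point removable.
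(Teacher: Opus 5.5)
Your proof is correct. The paper does not prove this lemma itself; it quotes it from \cite{PZ2023}. Your argument is the same cutoff scheme the paper uses for its second-order analogue, Lemma~\ref{lm:ruojieinonedimension2}: apply the hypothesis to the cut-off test function, then show that the term where the derivative falls on the cutoff vanishes in the limit.

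One point to tidy up. Your opening claim that the logarithmic cutoff is \emph{needed} contradicts your closing remark, and the closing remark is the accurate one. With a linear cutoff, $|\overline{\partial}\chi_r|\le C/r$ on $\mathbb{D}_r$ gives $\bigl|\int u\,\phi\,\overline{\partial}\chi_r\,dA\bigr|\le C\|\phi\|_\infty\|u\|_{L^2(\mathbb{D}_r)}\to 0$. This is the $L^2$ counterpart of the paper's bound $\frac{C}{r^2}\int_{|z|<r}|u|\,dA\to 0$ in Lemma~\ref{lm:ruojieinonedimension2}. The logarithmic cutoff only buys slightly more: $\int_{A_\delta}|u|^2\,dA=o(\log(1/\delta))$, together with $u\in L^1_{\mathrm{loc}}$, would already suffice.
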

\begin{remark}
Note that the $L^2$ condition  of $u$  is sharp, which cannot be weakened further,  as shown in the following example.
 Let
\[
u_0(z)=
\begin{cases}
\dfrac{1}{z}, & z \neq 0; \\[4pt]
0, & z=0.
\end{cases}
\]
 Then $ \overline{\partial} u_0=0$ in $\D \setminus \{0 \}$. However,   $u_0$ is not a weak solution to $ \overline{\partial} u_0 = 0$ in $\D$.
\end{remark}
%Another proof technique,  developed by Pan and Zhang \cite{PanZ2024},  fundamentally relies on Hironaka's theorem on resolution of singularities.  We state this as the following lemma.
 \begin{lemma} \label{le:pacificidea1}
 Let $\Omega\subset\mathbb C$ be a domain. Given a nonconstant  holomorphic function $f$ with $| f| < \frac{1}{10}$ on $\Omega$. Set $u:=\log(-\log |f|^2) $ and assume
$
Z(f) \neq \emptyset.
$
Then
 $
u\in W^{1,2}_{\mathrm{loc}}(\Omega)$.
 \end{lemma}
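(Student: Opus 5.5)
Since zeros of $f$ are isolated and $u$ is smooth on $\Omega\setminus Z(f)$, the statement is local. It suffices to show two things on a small disc $B$ around an arbitrary zero $a\in Z(f)$: first, $u\in L^2(B)$ and $\nabla u\in L^2(B)$ for the classical gradient computed off $a$; second, this classical gradient is the weak gradient across the point $a$.

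\textbf{Integrability of $u$.} Since $|f|<\frac1{10}$, we have $-\log|f|^2>\log 100>1$, so $u>0$ is well defined. Near $a$ write $f=\psi^{k}$ with $\psi$ a local biholomorphism, as in the proof of Theorem \ref{thm3.2}. Then
\[
u=\log\bigl(-2k\log|\psi|\bigr)\lesssim \log\log\frac{1}{|z-a|},
\]
which lies in $L^p$ for every $p<\infty$; in particular $u\in L^2_{\mathrm{loc}}$.

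\textbf{Integrability of the gradient.} By \eqref{eq:u-first-derivatives},
\[
|\partial_z u|=\frac{|f'|}{|f|\,|\log|f|^2|}.
\]
There are two ways to bound $\int_B|\partial_z u|^2$.

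The first is to use the local coordinate $w=\psi(z)$. Since $f'/f=k\psi'/\psi$, we get $|\partial_z u|^2\,dA_z=\frac{dA_w}{4|w|^2\log^2|w|}$. Over a small disc $|w|<\delta$ this is
\[
\frac{\pi}{2}\int_0^\delta\frac{dt}{t\log^2 t}<\infty .
\]

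The second is a layer-cake argument using \eqref{eq:sublevel-energy-asymptotic}. Decompose $E_{\varepsilon_0}$ into dyadic shells $E_{2^{-n}}\setminus E_{2^{-n-1}}$. On each shell $|f|\sim 2^{-n}$ and $|\log|f|^2|\sim n$, while $\int_{E_{2^{-n}}}|f'|^2\lesssim 4^{-n}$. Each shell therefore contributes $O(n^{-2})$, and the series converges.

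I would present the coordinate computation, since it is exact.

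\textbf{Weak derivative across $a$ (the main obstacle).} We need the classical derivatives to be the distributional ones across the point. I would apply Lemma \ref{lm:ruojieinonedimension} to $u$ itself (rescaled to $\mathbb D$, after translating $a$ to $0$). We have $u\in L^2$ and $\bar\partial u=\overline{f'}/(\bar f\log|f|^2)\in L^2\subset L^1$ on the punctured disc. The lemma then gives that $\bar\partial u$ is this function in $\mathcal D'$ of the whole disc. Since $u$ is real, the same identity for $\partial u$ follows by conjugation, so both real partial derivatives are the $L^2$ functions computed classically.

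An alternative route is a cutoff argument. Take $\chi_\varepsilon$ equal to $0$ on $B(a,\varepsilon)$ and to $1$ outside $B(a,2\varepsilon)$, with $|\nabla\chi_\varepsilon|\lesssim1/\varepsilon$. The error term is bounded by
\[
\varepsilon^{-1}\int_{B(a,2\varepsilon)}|u|\lesssim \varepsilon\log\log\frac1\varepsilon\to0.
\]
This gives the same conclusion directly.

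Combining the two steps, $u\in W^{1,2}_{\mathrm{loc}}(\Omega)$.
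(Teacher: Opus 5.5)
Your proposal is correct and follows essentially the same route as the paper. Both arguments obtain local integrability of $u$ from $u=\log\log(1/|z-a|)+O(1)$. Both then use the change of variables $w=\psi(z)$ with $f=w^k$ to show $\partial_{\bar z}u\in L^2$, via $\int_0^\delta dt/(t\log^2 t)<\infty$. Finally, both apply Lemma~\ref{lm:ruojieinonedimension} to $u$ itself and use conjugation to identify the weak derivatives.

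Your two alternatives are also valid but not needed. These are the dyadic layer-cake bound using \eqref{eq:sublevel-energy-asymptotic}, and the direct cutoff estimate, which is just the proof of Lemma~\ref{lm:ruojieinonedimension} specialized to this $u$.
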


%Given a nonconstant  holomorphic function $f$ on $\Omega\subset \C$  with $| f| < \frac{1}{10}$ on $\Omega$. Suppose that  the zero set of $f$ is non-empty,
%then $\log ( - \log |f|^2) \in W^{1,2}_{loc} (\Omega)$.
%\begin{enumerate*}
%\item .
 % \item If $f^{-1}(0) \neq 0$,  then  $\log ( - \log |f|^2) \notin \mathrm{W_{loc}^{1,p}}(\Omega)$ \ \text { for any } \ $p>2$;
%\end{enumerate*}
%Let $u  =\log ( - \log |f|^2)$, and
%\begin{eqnarray}
%g : = \frac{ f'}{ f}\cdot  \frac{1}{ \log (|f| ^2 )  }  \ \ \ \ \text{on} \ \   U .
%\end{eqnarray}
 %Then $g\in L^1_{loc}(U)$,  One has
% \begin{eqnarray}
%\frac{ \partial u }{\partial z}= g  \ \ \ \ \text{on} \ \   U .
%\end{eqnarray}
% in the sense of distribution.

\begin{proof}
Since $f$ is holomorphic and not identically zero on $\Omega$, its zero set $Z(f)$ consists of isolated points, and $u = \log(-\log |f|^2)$ is smooth on $\Omega \setminus Z(f)$. Note that the condition $|f| < \frac{1}{10}$ ensures $-\log|f|^2 > \log(100) > 1$, so $u$ is well-defined and positive throughout $\Omega \setminus Z(f)$.

We first verify that $u \in L^2_{\mathrm{loc}}(\Omega)$. Since $Z(f)$ is discrete, it suffices to analyze the local behavior near each zero $a \in Z(f)$. Without loss of generality, we may assume $a = 0$ with multiplicity $m \ge 1$. In a sufficiently small open disk $V \Subset \Omega$ centered at $0$, we can factorize $f(z) = z^m g(z)$, where $g \in \mathcal{O}(V)$ is nowhere vanishing. Then there exist constants $0 < c_1 < c_2 < \infty$ such that
\[
c_1 |z|^m \le |f(z)| \le c_2 |z|^m \quad \text{for all } z \in V \setminus \{0\}.
\]
Consequently, as $z \to 0$,
\[
u(z) = \log\bigl(-2m\log|z| - \log|g(z)|^2\bigr) = \log|\log|z|| + O(1).
\]
Since $\log|\log|z|| \in L^p_{\mathrm{loc}}(V)$ for every $p \in [1, \infty)$, it follows immediately that $u \in L^p_{\mathrm{loc}}(\Omega)$ for all $p \in [1, \infty)$; in particular, $u \in L^2_{\mathrm{loc}}(\Omega)$.

Next, we establish the existence and $L^2_{\mathrm{loc}}$-integrability of the distributional derivatives of $u$. On $\Omega \setminus Z(f)$, the classical Wirtinger derivatives of $u$ are given by
\begin{equation}\label{eq:u-first-derivatives}
\frac{\partial u}{\partial \bar{z}} = -\frac{\overline{f'(z)}}{\overline{f(z)} \log|f(z)|^2} \qquad \text{and} \qquad \frac{\partial u}{\partial z} = -\frac{f'(z)}{f(z) \log|f(z)|^2}.
\end{equation}
Define the measurable function $h \colon \Omega \to \mathbb{C}$ by
\[
h(z) := 
\begin{cases}
\dfrac{\partial u}{\partial \bar{z}}(z), & z \in \Omega \setminus Z(f), \\[6pt]
0, & z \in Z(f).
\end{cases}
\]
To show that $h$ represents the distributional derivative $\partial_{\bar{z}} u$ on $\Omega$, fix a zero $a = 0 \in Z(f)$ of multiplicity $m \ge 1$, and choose a simply connected neighborhood $V \Subset \Omega$ of $0$ such that $V \cap Z(f) = \{0\}$. On $V$, we introduce the local holomorphic coordinate 
\[
w = \psi(z) := z g(z)^{1/m},
\]
so that $f(z) = w^m$ and $\psi(0) = 0$. Since $\psi'(0) = g(0)^{1/m} \neq 0$, shrinking $V$ if necessary guarantees that $\psi \colon V \to \psi(V)$ is a biholomorphism, and both $|\psi'|$ and $|(\psi^{-1})'|$ are uniformly bounded above and below on $V$. In particular, there exists $r_0 \in (0, 1)$ such that $\psi(V) \subset \mathbb{D}_{r_0} := \{w \in \mathbb{C} : |w| < r_0\}$. 

Under this change of variables, $|f'/f|\,dA_z \asymp m|w|^{-1}\,dA_w$ and $\log|f|^2 = 2m\log|w|$. Thus,
\begin{equation}\label{eq:lp_for_h}
\int_V |h|^2\,dA_z = \int_{V \setminus \{0\}} \left| \frac{\partial u}{\partial \bar{z}} \right|^2 dA_z \lesssim \int_{\mathbb{D}_{r_0}} \frac{1}{|w|^2 |\log |w||^2}\,dA_w = 2\pi \int_0^{r_0} \frac{dr}{r |\log r|^2} < \infty,
\end{equation}
which implies $h \in L^2_{\mathrm{loc}}(V)$. By H\"older's inequality, we also have $h \in L^1_{\mathrm{loc}}(V)$. 

Since $u \in L^1_{\mathrm{loc}}(\Omega)$, $h \in L^1_{\mathrm{loc}}(\Omega)$, and $u$ is smooth outside the isolated set $Z(f)$, an application of Lemma~\ref{lm:ruojieinonedimension} ensures that
\[
\frac{\partial u}{\partial \bar{z}} = h \quad \text{on } \Omega
\]
holds in the sense of distributions, and hence $\partial_{\bar{z}} u \in L^2_{\mathrm{loc}}(\Omega)$. 

Finally, since $u$ is real-valued, the weak derivative $\partial_z u$ also exists and satisfies $\partial_z u = \overline{\partial_{\bar{z}} u}$ in the distributional sense, which directly gives $\partial_z u \in L^2_{\mathrm{loc}}(\Omega)$. Consequently, $|\nabla u| \in L^2_{\mathrm{loc}}(\Omega)$, which proves that $u \in W^{1,2}_{\mathrm{loc}}(\Omega)$.
\end{proof}

\begin{lemma}\label{lm:ruojieinonedimension2}
Let $\mathbb D$ be the unit disk and
$
P=\sum_{|\alpha|\le 2}a_\alpha(z)D^\alpha
$
be a linear differential operator of order two on $\mathbb D$, with coefficients $a_\alpha\in C^\infty(\mathbb D)$. Suppose that $u\in C(\mathbb D)$, $f\in L^1_{\mathrm{loc}}(\mathbb D)$, and
\[
Pu=f\ \ \text{on} \quad \mathbb  \D \setminus \{ 0\}
\]
in the sense of distributions. Then
\[
Pu=f \ \ \text{on} \quad \mathbb D
\]
in the sense of distributions.
\end{lemma}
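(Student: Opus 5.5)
Our plan is the standard cutoff argument: we show that the distribution $T := Pu - f$ on $\mathbb D$, which is supported in $\{0\}$, must vanish. Since $u$ is continuous, it lies in $L^1_{\mathrm{loc}}$, and $Pu$ is a well-defined distribution given by $\langle Pu,\varphi\rangle = \int u\,P^t\varphi\,dA$, where $P^t\varphi=\sum_{|\alpha|\le2}(-1)^{|\alpha|}D^\alpha(a_\alpha\varphi)$ is the formal transpose. Also $f\in L^1_{\mathrm{loc}}$ defines a distribution. By hypothesis $T$ vanishes on $\mathbb D\setminus\{0\}$, so $\operatorname{supp}T\subset\{0\}$. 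By the structure theorem, $T$ is then a finite linear combination of $\delta_0$ and its derivatives. We will avoid invoking this theorem, however, and instead run a direct cutoff argument that also shows $T$ has order zero and vanishes.

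Fix $\chi\in C_c^\infty(\mathbb D_1)$ with $\chi\equiv1$ on $\mathbb D_{1/2}$ and $0\le\chi\le1$. Set $\chi_\delta(z)=\chi(z/\delta)$. For $\varphi\in C_c^\infty(\mathbb D)$ we split
\[
\varphi=\chi_\delta\varphi+(1-\chi_\delta)\varphi .
\]
The second piece is supported away from $0$, so $\langle T,(1-\chi_\delta)\varphi\rangle=0$. It therefore suffices to show that
\[
\langle T,\chi_\delta\varphi\rangle=\int u\,P^t(\chi_\delta\varphi)\,dA-\int f\chi_\delta\varphi\,dA\longrightarrow0
\qquad\text{as }\delta\to0 .
\]
The $f$-term is bounded by $\|\varphi\|_\infty\int_{\mathbb D_\delta}|f|\,dA$, which tends to $0$ by absolute continuity of the integral, since $f\in L^1$ near $0$.

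The main term is the $u$-term, where second derivatives fall on the cutoff and produce factors of size $\delta^{-2}$. First we use $P^t 1$ (that is, the zero-order part of $P^t$) to reduce to the case $u(0)=0$. Write $u=u(0)+(u-u(0))$. The constant part gives
\[
u(0)\langle P1,\chi_\delta\varphi\rangle=u(0)\int a_0\,\chi_\delta\varphi\,dA\to0,
\]
because a constant is smooth and $P1=a_0$ is a bounded function. It therefore remains to treat $\tilde u:=u-u(0)$, which is continuous with $\tilde u(0)=0$. Expanding $P^t(\chi_\delta\varphi)$ by the Leibniz rule, every term is bounded by $C\delta^{-|\beta|}$, where $|\beta|\le2$ is the number of derivatives falling on $\chi_\delta$. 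The constant $C$ depends only on $\varphi$, $\chi$ and the $C^2$ norms of the $a_\alpha$ on $\mathbb D_{1/2}$. Every term is supported in $\mathbb D_\delta$, so we obtain
\[
\Bigl|\int\tilde u\,P^t(\chi_\delta\varphi)\,dA\Bigr|\le C\,\delta^{-2}\cdot\pi\delta^2\cdot\sup_{\mathbb D_\delta}|\tilde u|=C'\sup_{\mathbb D_\delta}|\tilde u|\to0 .
\]
This uses the continuity of $u$ at $0$.

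This scaling is exactly the critical point: in dimension two, the area $\delta^2$ precisely compensates the two derivatives on the cutoff. Mere boundedness of $u$ would only give an $O(1)$ bound, so continuity (through the vanishing of $\tilde u$ at $0$) is what makes the limit zero. Combining the three pieces yields $\langle T,\varphi\rangle=0$ for every test function $\varphi$, that is, $Pu=f$ on $\mathbb D$ in the sense of distributions.
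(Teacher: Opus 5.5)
Your proof is correct and follows essentially the same cutoff argument as the paper: the factor $\delta^{-2}$ from two derivatives landing on $\chi_\delta$ is offset by the area $\pi\delta^2$ of the support, and the remaining factor $\sup_{\mathbb D_\delta}|u-u(0)|$ tends to $0$ by continuity of $u$ at $0$. The only cosmetic difference is how the constant is handled: you dispose of it directly via $u(0)\int P^t(\chi_\delta\varphi)\,dA = u(0)\int a_0\chi_\delta\varphi\,dA\to 0$, whereas the paper absorbs it by replacing $f$ with $f-u(0)P(1)$. (That step uses $P1=a_0$, not $P^t1$ as your parenthetical says.)
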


\begin{proof}
Setting $\widetilde{u} := u - u(0)$ and $\widetilde{f} := f - u(0)P(1)$, we observe that $\widetilde{u}(0) = 0$ and $P\widetilde{u} = \widetilde{f}$ holds on $\mathbb{D} \setminus \{0\}$ in the sense of distributions. Since establishing the assertion for the pair $(\widetilde{u}, \widetilde{f})$ immediately implies the result for $(u, f)$, we may assume without loss of generality that $u(0) = 0$ by relabeling. Let $P^*$ denote the formal adjoint of $P$. To prove that $Pu = f$ on $\mathbb{D}$ in the distributional sense, it suffices to show that for every test function $\varphi \in C_c^\infty(\mathbb{D})$ (or $C_0^\infty(\mathbb{D})$),
\begin{equation}\label{eq:secondwant}
\int_{\mathbb{D}} u , P^*\varphi , dA = \int_{\mathbb{D}} f \varphi , dA.
\end{equation}
%Set
%$\widetilde u:=u-u(0),\widetilde f:=f-u(0)P(1)$. Then $\widetilde u(0)=0$ and
%$
%P\widetilde u=\widetilde f$ on $\D \setminus \{ 0\}$
%in the sense of distributions. Since the conclusion for $(\widetilde u,\widetilde f)$ immediately implies the conclusion for $(u,f)$, after relabeling $\widetilde u$ and $\widetilde f$ as $u$ and $f$, respectively, we may assume without loss of generality that
%$ u(0)=0$. Let $P^*$ denote the formal adjoint of $P$ with respect to the distributional pairing. It remains to show that, for every test function $\varphi\in C_0^\infty(\mathbb D)$,
%\begin{equation} \label{eq:secondwant}
%\int_{\mathbb D}u\,P^*\varphi\,dA = \int_{\mathbb D}f\varphi\,dA.
%\end{equation}
For $0<r<1$, choose $\chi_r\in C_0^\infty(\mathbb D)$ such that
\[
0\le \chi_r\le1,\qquad
\chi_r\equiv1\ \text{on }\mathbb D_{r/2},\qquad
\operatorname{supp}\chi_r\subset\mathbb D_r,
\]
and
\[
|\nabla\chi_r|\le\frac{C}{r},\qquad
|D^2\chi_r|\le\frac{C}{r^2},
\]
where $C>0$ is independent of $r$. Then, for every test function $\varphi\in C_0^\infty(\mathbb D)$,
the function $(1-\chi _r) \varphi$ is a test function on $\D \setminus \{ 0\}$. Hence,
\[
\int_{\mathbb D} u\,P^*\bigl((1-\chi_r)\varphi\bigr)\,dA
= \int_{\mathbb D} f(1-\chi_r)\varphi\,dA.
\]
Equivalently,
\begin{equation}\label{eq:seconddist}
\begin{aligned}
\int_{\mathbb D}uP^*\varphi\,dA
-\int_{\mathbb D}uP^*(\chi_r\varphi)\,dA
&=
\int_{\mathbb D}f\varphi\,dA
-\int_{\mathbb D}f\chi_r\varphi\,dA.
\end{aligned}
\end{equation}
It suffices to prove that
\[
\lim_{r\to 0}\int_{\mathbb D} u\,P^*(\chi_r\varphi)\,dA=0,
\qquad
\lim_{r\to 0}\int_{\mathbb D} f\chi_r\varphi\,dA=0.
\]
Once these limits are established, letting $r\rightarrow 0$ in (\ref{eq:seconddist}), we obtain the desired equality ($\ref{eq:secondwant}$). Since $f\in L^1_{\mathrm{loc}}(\mathbb D)$,
\begin{align*}
\left |\int_{ \D}  f \chi _r \varphi\,dA  \right| \leq C  \int _{|z|<r} |f|\,dA  \rightarrow 0,  \ \   r\rightarrow 0.
\end{align*}
On the other hand,   since $u$ is continuous on $\D$ and $u(0)=0$,
\begin{align*}
\left|\int_{\mathbb D}u\,P^*(\chi_r\varphi)\,dA\right|\le \frac{C}{r^2}\int_{|z|<r}|u(z)|\,dA   \rightarrow 0,  \ \   r\rightarrow 0.
\end{align*}
This completes the proof.
\end{proof}

%\begin{remark}
%The assumption of continuity can not be removed in Lemma \ref{lm:ruojieinonedimension2}.  For example, the following function $u(x)$ on $[-1,1]$, where
%\begin{numcases}{u(x)=}
%x, & $x<0;$  \\  \nonumber
%  0, & $x\geq 0$.
%\end{numcases}
%It is obvious that $u''(x) =0$ for every $x\neq 0.$  But $u''(0) = \delta_0$,  in the sense of distributions, where $\delta_0$ is the dirac measure.
%It should be noted that such $u$ is $L^\infty$, but not continuous. On the other hand, continuity of $u$ is not necessary, as shown the case in  the proof of Theorem \ref{th:maintheorem1},  $\Delta u = g$ in the senses of distributions.
%\end{remark}

\begin{remark}
The continuity assumption in Lemma \ref{lm:ruojieinonedimension2} cannot simply be omitted. Indeed, consider the particular case $P=\Delta$ and let
\[
u(z):=\log |z|,\qquad z\in\mathbb D\setminus\{0\},
\]
with an arbitrary value assigned at $z=0$. Then $u\in L^1_{\mathrm{loc}}(\mathbb D)$ and
\[
\Delta u=0 \quad \text{on} \quad \mathbb  \D \setminus \{ 0\}
\]
in the classical sense. However,
\[
\Delta u=2\pi\delta_0 \quad\text{on }\quad \mathbb D
\]
in the sense of distributions. Thus the conclusion of Lemma \ref{lm:ruojieinonedimension2} may fail if the continuity assumption is replaced merely by local integrability. On the other hand, continuity is not necessary in some particular cases. As shown in the proof of Theorem \ref{th:maintheorem1}, the identity
 $
\Delta u=g_u
 $
may still hold in the sense of distributions even when  $u $ is not continuous.
\end{remark}

\subsection{Proof of Theorem \ref{th:maintheorem1}}

\begin{proof}[Proof of Theorem \ref{th:maintheorem1}]
(1) The assertion $u \in W^{1,2}_{\mathrm{loc}}(\Omega)$ has been established in Lemma~\ref{le:pacificidea1}. It remains to show that $u \notin W_{\mathrm{loc}}^{1,p}(\Omega)$ near $Z(f)$ for any $p > 2$. 

Fix $a \in Z(f)$. Translating if necessary, we may assume without loss of generality that $a = 0$ with vanishing multiplicity $m \ge 1$. As in the proof of Lemma~\ref{le:pacificidea1}, there exist an open neighborhood $U \Subset \Omega$ of the origin, a radius $r_0 \in (0, 1)$, and a biholomorphism $\psi \colon U \to \psi(U) \supset \mathbb{D}_{r_0} := \{w \in \mathbb{C} : |w| < r_0\}$ such that $\psi(0) = 0$ and $f(z) = \psi(z)^m = w^m$. In the local coordinate $w = \psi(z)$, we have
\[
\frac{\partial u}{\partial z}(z) = -\frac{f'(z)}{f(z) \log|f(z)|^2} = -\frac{m \psi'(z)}{w \log|w|^{2m}}.
\]
Since $\psi'(0) \neq 0$, shrinking $U$ if necessary ensures that $|\psi'| \asymp 1$ on $U$. Applying the change of variables $w = \psi(z)$, we obtain
\begin{equation}\label{eq:u-W1p-divergence}
\int_U \left| \frac{\partial u}{\partial z} \right|^p dA_z \gtrsim \int_{\mathbb{D}_{r_0}} \frac{dA_w}{|w|^p |\log|w||^p} = 2\pi \int_0^{r_0} \frac{dr}{r^{p-1} |\log r|^p} = \infty \qquad (p > 2).
\end{equation}
Hence, $u \notin W^{1,p}(U)$ for every $p > 2$. Since $a \in Z(f)$ was arbitrary, $u$ fails to belong to $W^{1,p}_{\mathrm{loc}}$ on any open set intersecting $Z(f)$.

\medskip
\noindent (2) On $\Omega \setminus Z(f)$, direct differentiation via \eqref{eq:u-laplacian} gives
\[
\Delta u = -4 \left| \frac{f'(z)}{f(z)} \right|^2 \frac{1}{\bigl(\log|f(z)|^2\bigr)^2} = -4 \left| \frac{\partial u}{\partial z} \right|^2.
\]
By part (1), $\partial_z u \in L^2_{\mathrm{loc}}(\Omega)$, which implies that $|\partial_z u|^2 \in L^1_{\mathrm{loc}}(\Omega)$. Since $Z(f)$ is a discrete set of zero Lebesgue measure, the pointwise definition of $g_u$ on $Z(f)$ is negligible in $L^1_{\mathrm{loc}}(\Omega)$, and we conclude that $g_u \in L_{\mathrm{loc}}^1(\Omega)$.

\medskip
\noindent (3) We next show that $\Delta u = g_u$ on $\Omega$ in the sense of distributions. Since the claim is local, it suffices to examine an isolated zero $a = 0 \in Z(f)$. Let $U \Subset \Omega$ be an open disk centered at $0$ such that $\overline{U} \cap Z(f) = \{0\}$. We must verify that
\begin{equation}\label{eq:theo1-3}
\int_U u \Delta \varphi \, dA = \int_U g_u \varphi \, dA \qquad \text{for all } \varphi \in C_c^\infty(U).
\end{equation}
If $0 \notin \operatorname{supp} \varphi$, the identity holds trivially since $u \in C^\infty(U \setminus \{0\})$ and $\Delta u = g_u$ pointwise on $U \setminus \{0\}$. 

Now suppose $0 \in \operatorname{supp} \varphi$. For $\varepsilon > 0$ sufficiently small, define the sublevel and level sets
\[
K_{\varepsilon, U} := \{z \in U : |f(z)| \le \varepsilon\}, \qquad \Gamma_{\varepsilon, U} := \{z \in U : |f(z)| = \varepsilon\}.
\]
For all sufficiently small $\varepsilon$, $K_{\varepsilon, U} \Subset U$ and $\Gamma_{\varepsilon, U}$ is a smooth compact 1-manifold. Applying Green's second identity on $U \setminus K_{\varepsilon, U}$ yields
\begin{equation}\label{eq:green}
\int_{U \setminus K_{\varepsilon, U}} \bigl( u \Delta \varphi - \varphi \Delta u \bigr) \, dA = - \int_{\Gamma_{\varepsilon, U}} u \frac{\partial \varphi}{\partial n} \, ds + \int_{\Gamma_{\varepsilon, U}} \varphi \frac{\partial u}{\partial n} \, ds,
\end{equation}
where $n$ denotes the outward unit normal to $\partial K_{\varepsilon, U} = \Gamma_{\varepsilon, U}$, and the boundary terms on $\partial U$ vanish identically because $\operatorname{supp} \varphi \Subset U$.

As $\varepsilon \to 0^+$, the dominated convergence theorem implies that the left-hand side of \eqref{eq:green} converges to $\int_U u \Delta \varphi \, dA - \int_U g_u \varphi \, dA$. It therefore suffices to prove that both boundary integrals on the right-hand side vanish asymptotically.

First, by the length estimate in Theorem~\ref{thm3.3},
\[
\left| \int_{\Gamma_{\varepsilon, U}} u \frac{\partial \varphi}{\partial n} \, ds \right| \le \|\nabla \varphi\|_{L^\infty(U)} \log(-\log \varepsilon^2) \, \mathcal{H}^1(\Gamma_{\varepsilon, U}) \le C \|\nabla \varphi\|_{L^\infty(U)} \varepsilon^{1/m} \log(-\log \varepsilon^2) \longrightarrow 0
\]
as $\varepsilon \to 0^+$, where $m \ge 1$ is the multiplicity of $f$ at $0$.

Second, noting that $|\partial_n u| \le |\nabla u| = \frac{2|f'(z)|}{|f(z)| |\log|f(z)|^2|}$, we have
\[
\left| \int_{\Gamma_{\varepsilon, U}} \varphi \frac{\partial u}{\partial n} \, ds \right| \le \frac{2\|\varphi\|_{L^\infty(U)}}{\varepsilon |\log \varepsilon^2|} \int_{\Gamma_{\varepsilon, U}} |f'(z)| \, ds.
\]
Applying the asymptotic expansion for the level-set integral from Theorem~\ref{thm3.2},
\[
\int_{\Gamma_{\varepsilon, U}} |f'(z)| \, ds = 2\pi m \varepsilon + o(\varepsilon),
\]
which immediately yields
\[
\left| \int_{\Gamma_{\varepsilon, U}} \varphi \frac{\partial u}{\partial n} \, ds \right| \le \frac{C}{|\log \varepsilon|} \longrightarrow 0 \qquad \text{as } \varepsilon \to 0^+.
\]
Taking $\varepsilon \to 0^+$ in \eqref{eq:green} confirms \eqref{eq:theo1-3}, and hence $\Delta u = g_u$ on $\Omega$ distributionally.

\medskip
\noindent (4) Finally, we show that $u \notin W_{\mathrm{loc}}^{2,1}(\Omega)$ near $Z(f)$. In the biholomorphic coordinate $w = \psi(z)$ near $0 \in Z(f)$ with $f(z) = w^m$, let $\widetilde{u}(w) := u(\psi^{-1}(w))$. A direct calculation shows that on $\mathbb{D}_{r_0} \setminus \{0\}$,
\begin{equation}\label{eq:4.12}
\frac{\partial^2 \widetilde{u}}{\partial w^2} = -\frac{1}{w^2 \log|w|^2} - \frac{1}{w^2 (\log|w|^2)^2} = -\frac{1 + \log|w|^2}{w^2 (\log|w|^2)^2}.
\end{equation}
Choosing $r_0 \in (0, e^{-1})$ small enough so that $\log|w|^2 \le -2$ for all $0 < |w| < r_0$, we have $|1 + \log|w|^2| \ge \frac{1}{2}|\log|w|^2|$. Thus,
\[
\left| \frac{\partial^2 \widetilde{u}}{\partial w^2} \right| \ge \frac{1}{4 |w|^2 |\log|w||}.
\]
By the chain rule for second-order derivatives under conformal mappings, $$\partial_{z}^2 u(z) = \frac{\partial^2 \widetilde{u}}{\partial w^2}(\psi(z)) (\psi'(z))^2 + \frac{\partial \widetilde{u}}{\partial w}(\psi(z)) \psi''(z).$$ Because $\psi'(0) \neq 0$ and the first-derivative term is locally integrable in $L^1$, the lower bound of the transformation satisfies
\[
\int_U \left| \frac{\partial^2 u}{\partial z^2} \right| dA_z \gtrsim \int_{\mathbb{D}_{r_0}} \left| \frac{\partial^2 \widetilde{u}}{\partial w^2} \right| dA_w \ge \frac{1}{4} \int_{\mathbb{D}_{r_0}} \frac{dA_w}{|w|^2 |\log|w||} = \frac{\pi}{2} \int_0^{r_0} \frac{dr}{r |\log r|} = \infty.
\]
Consequently, $\partial_z^2 u \notin L_{\mathrm{loc}}^1(U)$, which proves that $u \notin W_{\mathrm{loc}}^{2,1}(\Omega)$ in any neighborhood of $Z(f)$.
\end{proof}

\subsection{Proof of Theorem \ref{th:maintheorem2}}

\begin{proof}[Proof of Theorem \ref{th:maintheorem2}]

Since $v=u^{-1}$ on $\Omega\setminus Z(f)$ and \(u(z)\to+\infty\) as \(z\to Z(f)\), the extension of $v$ by zero on $Z(f)$ is continuous on $\Omega$.
Moreover, since $|f|<1/10$ on $\Omega$, we have
$
u=\log(-\log|f|^2)>\log(\log 100)>0
$
on $\Omega\setminus Z(f)$, and hence $v\in L^\infty(\Omega)$.
We now prove the remaining assertions.

$(1)$  In order to prove $v\in W^{1,2}_{\mathrm{loc}}(\Omega)$,  first we need to show that the weak derivatives $\frac{\partial v}{\partial z}$ and $\frac{\partial v}{\partial \overline{z}}$ both exist in $\Omega$.  Since $\frac{\partial v}{\partial z}$ and $\frac{\partial v}{\partial \overline{z}}$ are conjugates of each other,  it suffices to prove that the weak derivative $\frac{\partial v}{\partial \overline{z}}$ exists. By \eqref{eq:v-first-derivatives}, on $\Omega\setminus Z(f)$ we have
\[
\frac{\partial v}{\partial \overline{z}}
=
-{v^2}\frac{\partial u}{\partial \overline{z}}
=
-\frac{\overline{f'}}{\overline{f}}\,
\frac{1}{\log |f|^2}\,
\frac{1}{\bigl(\log(-\log |f|^2)\bigr)^2}.
\]
Define
\[
h(z):=
\begin{cases}
\dfrac{\partial v}{\partial\bar z}, & z\notin Z(f),\\[4pt]
0, & z\in Z(f).
\end{cases}
\]

By Theorem \ref{th:maintheorem1}, we have
$
\frac{\partial u}{\partial \overline{z}}
\in L_{\mathrm{loc}}^2(\Omega).
$
Since \(v\) is bounded on \(\Omega\), it follows that
\[
h\in L_{\mathrm{loc}}^2(\Omega)
\subset L_{\mathrm{loc}}^1(\Omega).
\]
We claim that
$\frac{\partial v}{\partial \overline{z}}=h$ holds on $\Omega$ in the sense of distributions. Indeed, fix \(a\in Z(f)\), after translation, we may assume that $a = 0$. Choose a disk \(U\Subset\Omega\), centered at \(0\), such that \(0\) is the only zero of
\(f\) in \(U\). On \(U\setminus\{0\}\), the identity
$
\frac{\partial v}{\partial\bar z}=h
$
holds classically. Since $v$ is continuous on $U$, it clearly belongs to $L^2_{\mathrm{loc}}(U)$. Then, by Lemma \ref{lm:ruojieinonedimension}, the identity $\frac{\partial v} {\partial \overline{z}}=h$ holds on $U$ in the sense of distributions.  Since $a\in Z(f)$ was arbitrary and the identity already holds classically on $\Omega\setminus Z(f)$, it follows that
$\frac{\partial v}{\partial\bar z}=h$ on $\Omega$ in the sense of distributions.  Similarly, the weak derivative $ \frac{ \partial v} {\partial z} $ exists and $ \frac{ \partial v} {\partial z}\in L^2_{\mathrm{loc}}(\Omega)$.  Thus, $v\in W^{1,2}_{\mathrm{loc}}(\Omega)$.

It remains to prove the failure of $W^{1,p}_{\mathrm{loc}}$-regularity for any $p>2$. Select $a\in Z(f)$, we again use the same local coordinate construction as in the proof of $(1)$ in Theorem \ref{th:maintheorem1}. Set
$
\widetilde v(w):=v\bigl(\psi^{-1}(w)\bigr).
$
In the \(w\)-coordinate, we have
\[
\begin{aligned}
\int_U
\left|
\frac{\partial v}{\partial z}
\right|^p dA_z
&\gtrsim
\int_{|w|<r_0}
\left|
\frac{\partial\widetilde v}{\partial w}
\right|^p dA_w\\
&\approx
\int_{|w|<r_0}
\frac{1}
{|w|^p|\log|w||^p
\bigl(\log|\log|w||\bigr)^{2p}}dA_w\\
&=
2\pi\int_0^{r_0}
\frac{1}
{r^{p-1}|\log r|^p
\bigl(\log|\log r|\bigr)^{2p}}dr.
\end{aligned}
\]
For $p>2$, the last integral diverges. Thus $v\notin W^{1,p}_{\mathrm{loc}}(U)$ for any $p>2$. Since $a\in Z(f)$ was arbitrary, $v$ fails to belong to $W^{1,p}_{\mathrm{loc}}$ in every neighborhood of each point of $Z(f)$. This proves  $(1)$.

$(2)$ Recall that
$
u=\log\bigl(-\log|f|^2\bigr).
$
On $\Omega\setminus Z(f)$, Combining \eqref{eq:v-laplacian} with $v=u^{-1}$, we obtain
\begin{align} \label{computation}
g_v=\Delta v
&=
4\left|\frac{f'}{f}\right|^2
\frac{1}{\bigl(\log|f|^2\bigr)^2u^2}
+
8\left|\frac{f'}{f}\right|^2
\frac{1}{\bigl(\log|f|^2\bigr)^2u^3}\\ \nonumber
&=
4\left|
\frac{\partial u}{\partial z}
\right|^2
\left(
{v^2}+2{v^3}
\right).
\end{align}
By Theorem \ref{th:maintheorem1}, we have
$
\frac{\partial u}{\partial z}
\in L_{\mathrm{loc}}^2(\Omega).
$
Moreover, we know that $v$ is bounded on $\Omega$, and hence $v^{2}$ and $v^{3}$ are bounded on $\Omega$. It follows from \eqref{computation} that
$
g_v\in L_{\mathrm{loc}}^1(\Omega).
$
 This proves $(2)$.

$(3)$ Since the assertion is local, fix $a\in Z(f)$, and choose a disk $U\Subset\Omega$, centered at $a$, such that $a$ is the only zero of $f$ in $U$. On $U\setminus\{a\}$, the identity
$
\Delta v=g_v
$
holds in the classical sense. Moreover,
$v\in C(U)$ and $g_v\in L_{\mathrm{loc}}^1(U)$.
After translation and rescaling, Lemma \ref{lm:ruojieinonedimension2} applied with $P=\Delta$ yields
$\Delta v=g_v$ on $U$
in the sense of distributions. Since $a\in Z(f)$ was arbitrary and the identity holds classically on $\Omega\setminus Z(f)$, we conclude that
$
\Delta v=g_v$ in $\Omega$ in the sense of distributions. This proves $(3)$.

$(4)$  By part \textup{(3)} of  Theorem \ref{th:maintheorem2}, we only need to verify the weak derivatives $\frac{\partial ^2v}{ \partial z \partial z} $,  $\frac{\partial ^2 v}{ \partial \overline{z} \partial \overline{z}} $ exist  on $\Omega$  and are in $L^1_{\mathrm{loc}}$ near $Z(f)$.  Since they are mutually conjugate,  it is sufficient to check the weak derivatives  $\frac{\partial ^2 v}{ \partial z \partial z} $.

On \(\Omega\setminus Z(f)\), by \eqref{eq:v-pure-second-derivative}, write

\begin{equation}
\begin{split}
\frac{\partial^2 v}{\partial z^2}
&= - \frac{f''}{f}\frac{1}{\log(|f|^2)\bigl(\log(-\log|f|^2)\bigr)^2} \\
&\quad + \left( \frac{f'}{f}\right)^2
   \frac{1}{\log(|f|^2)\bigl(\log(-\log|f|^2)\bigr)^2} \\
&\quad + \left( \frac{f'}{f}\right)^2
   \frac{1}{\bigl(\log(|f|^2)\bigr)^2 \bigl(\log(-\log|f|^2)\bigr)^2} \\
&\quad + 2 \left( \frac{f'}{f}\right)^2
   \frac{1}{\bigl(\log(|f|^2)\bigr)^2 \bigl(\log(-\log|f|^2)\bigr)^3} \\
&= A_1 + A_2 + A_3 + A_4.
\end{split}
\label{eq:2nd_derivative_of _v}
\end{equation}
Define 
\[
h(z):=
\begin{cases}
\dfrac{\partial^2v}{\partial z^2}(z),
& z\in\Omega\setminus Z(f),\\[8pt]
0,
& z\in Z(f).
\end{cases}
\]
Since \(v\) is continuous on \(\Omega\), we have
$
v\in L_{\mathrm{loc}}^1(\Omega).
$
To apply Lemma \ref{lm:ruojieinonedimension2} with \(P=\frac{\partial^2 }{\partial z^2} \), by \eqref{eq:2nd_derivative_of _v} it remains to show that
$
A_1+A_2+A_3+A_4\in L_{\mathrm{loc}}^1(\Omega).
$
Note that the terms $A_3$ and $A_4$ are pointwise bounded by
\begin{eqnarray*}
|A_3|, |A_4| \leq C \left| \frac{f' }{f \log(|f|^2)}\right|^2.
\end{eqnarray*}
By $(1)$ of Theorem \ref{th:maintheorem1}, we obtain $A_3, A_4\in L^1_{\mathrm{loc}}(\Omega)$. To treat the pure second-order derivative, fix $a\in Z(f)$ and use the same
local coordinate $w=\psi(z)$ as in the proof of Lemma~3.2, so that
$f(z)=w^m$. Set
$\widetilde v(w):=v\bigl(\psi^{-1}(w)\bigr).$
A direct computation gives

\begin{align*}
\frac{\partial^2 \widetilde v}{\partial w^2}
&= 
\frac{m}
{w^2\log |w|^{2m}
 \bigl[\log(-\log |w|^{2m})\bigr]^2}
+
\frac{m^2}
{w^2(\log |w|^{2m})^2
 \bigl[\log(-\log |w|^{2m})\bigr]^2} \\
&+
\frac{2m^2}
{w^2(\log |w|^{2m})^2
 \bigl[\log(-\log |w|^{2m})\bigr]^3}.
\end{align*}

Hence, after decreasing $r_0>0$ if necessary,
\[
\left|
\frac{\partial^2 \widetilde v}{\partial w^2}
\right|
\le
\frac{C}
{|w|^2|\log |w||
 \bigl[\log(-\log |w|)\bigr]^2},
\qquad 0<|w|<r_0.
\]
Therefore,
\[
\begin{aligned}
\int_{|w|<r_0}
\left|
\frac{\partial^2 \widetilde v}{\partial w^2}
\right|\,dA_w
&\lesssim
\int_0^{r_0}
\frac{dr}
{r|\log r|[\log(-\log r)]^2} \\
&<\infty.
\end{aligned}
\]
Thus the classical derivative
$\partial^2\widetilde v/\partial w^2$ belongs to
$L^1_{\mathrm{loc}}$. Since $\widetilde v$ is continuous across $w=0$,
Lemma~\ref{lm:ruojieinonedimension2}, applied with
$P=\partial^2/\partial w^2$, shows that this classical derivative
coincides with the distributional derivative on a neighborhood of $0$.

Since the coordinate change is biholomorphic, local $W^{2,1}$-regularity
is invariant under this change of coordinates. Together with part~(3),
which gives
$\frac{\partial^2 v}{\partial z\,\partial\bar z} \in L^1_{\mathrm{loc}}(\Omega),$
and the conjugate pure derivative, we conclude that
$v\in W^{2,1}_{\mathrm{loc}}(U)$ near $Z(f)$.

Finally, we show that
$
v\notin W_{\mathrm{loc}}^{2,p}(\Omega)$ for any $ p>1$ near \(Z(f)\). 
To the contrary, suppose that there exist $p > 1$, $a \in Z(f)$, and a neighborhood $U \Subset \Omega$ of $a$ such that
$
v \in W^{2,p}_{\mathrm{loc}}(U).
$
Since the pure complex second derivative is a finite linear combination of the real second-order derivatives, it follows that
$
\frac{\partial^2 v}{\partial z^2}\in L^p(U).
$
Since local Sobolev regularity is invariant under biholomorphic diffeomorphisms, by the same local coordinate construction as above, the function in the new coordinates,
$
\widetilde v(w):=v\bigl(\psi^{-1}(w)\bigr),
$
must satisfy
$\frac{\partial^2 \widetilde{v}}{\partial w^2}
\in L_{}^p\bigl(\psi(U)\bigr).
$

Summing $A_1$ and $A_2$ reveals a crucial algebraic cancellation:
\[
A_1+A_2
=
\frac{m^2-m(m-1)}
{w^2\log |w|^{2m}
\left(\log\left(-\log |w|^{2m}\right)\right)^2}
=
\frac{m}
{w^2\log |w|^{2m}
\left(\log\left(-\log |w|^{2m}\right)\right)^2}.
\]
Since \(m\geq 1\), the numerator is nonzero. Thus the leading \(w^{-2}\) singularities in \(A_1\) and \(A_2\) do not cancel completely. Moreover,
\[
\bigg|\frac{A_3}{A_1+A_2}\bigg|
=
\frac{1}{|\log |w|^2|}
\longrightarrow 0
\qquad \text{as } w\to 0,
\]
and
\[
\bigg|\frac{A_4}{A_1+A_2}\bigg|
=
\frac{2}
{|\log |w|^2\,
\log(-\log |w|^{2m})|}
\longrightarrow 0
\qquad \text{as } w\to 0.
\]
Hence \(A_3\) and \(A_4\) are strictly lower-order terms compared with
\(A_1+A_2\). After decreasing \(r_0\) if necessary, we may therefore assume that
$
|A_3+A_4|
\leq
\frac{1}{2}|A_1+A_2|$
for $ 0<|w|<r_0.
$
It follows that
$
\left|
\frac{\partial^2 \widetilde v}{\partial w^2}
\right|
=
|A_1+A_2+A_3+A_4|
\geq
\frac{1}{2}|A_1+A_2|.
$
Therefore, there exists a constant \(C>0\) such that
\[
\left|
\frac{\partial^2 \widetilde v}{\partial w^2}
\right|^p
\geq
\frac{C}
{|w|^{2p}\,|\log |w||^p\,
\bigl|\log(-\log |w|^2)\bigr|^{2p}}
\]
for all \(0<|w|<r_0\).
Then, we obtain
\[
\int_{\{|w|<r_0\}}
\left|
\frac{\partial^2 \widetilde{v}}{\partial w^2}
\right|^p
\,dA_w
\gtrsim
\int_0^{r_0}
\frac{1}
{r^{2p-1}|\log r|^p
\bigl|\log(-\log r^2)\bigr|^{2p}}dr.
\]
For any $p>1$, the last integral diverges. Consequently,
$
\int_{\psi(U)}
\left|
\frac{\partial^2 \widetilde{v}}{\partial w^2}
\right|^p
\,dA_w
=
\infty,
$
which contradicts  our initial assumption that all  second-order weak derivatives are locally in $L^p$.
Therefore,
$
v\notin W^{2,p}_{\mathrm{loc}}(U)$ for every $ p>1.$ Since \(a\in Z(f)\) was arbitrary, \(v\) fails to belong to \(W^{2,p}_{\mathrm{loc}}\) in any neighborhood of each point of \(Z(f)\). This completes the proof of \((4)\).

\end{proof}

\section{A complementary counterexample at the $p=\infty$ endpoint}\label{sec4}

In this section, we prove Theorem \ref{Linfty}, which gives a complementary counterexample construction at the endpoint $p = \infty$. For every holomorphic function satisfying its assumptions, the construction produces a function $u \in C^1(U)$ and a function $g \in L^\infty_{\mathrm{loc}}(U)$ such that
$
\Delta u = g
$
in the sense of distributions, whereas
$
u \notin W^{2,\infty}_{\mathrm{loc}}(U).
$
We also show that the exponent $2$ in the factor $P^2$ is critical.

\begin{proof}[Proof of Theorem \ref{Linfty}]

%\medskip
%\noindent\textbf{Proof of (1).}
We prove the three assertions in order.

$(1)$ Fix $a_j\in Z(f)\cap U$, and let $m_j\ge1$ be the multiplicity of $a_j$ as a zero of $f$.
In a small neighborhood of $a_j$, we may write
$$
f(z)=(z-a_j)^{m_j}h_j(z),
\qquad h_j(a_j)\neq0.
$$
Since $P$ has a simple zero at $a_j$, we may also write
$$
P(z)=(z-a_j)Q_j(z),
\qquad Q_j(a_j)\neq0,
$$
where $Q_j$ is holomorphic near $a_j$. Hence $P(z)^2=(z-a_j)^2Q_j(z)^2$.

Near $a_j$, we have
$$
P(z)^2\log|f(z)|^2
=
(z-a_j)^2Q_j(z)^2
\left(
m_j\log|z-a_j|^2+\log|h_j(z)|^2
\right).
$$
The function $(z-a_j)^2\log|z-a_j|^2$ is $C^1$ near $a_j$. Therefore
$P(z)^2\log|f(z)|^2$ extends as a $C^1$ function across $a_j$.
Since this argument applies to every zero $a_j\in Z(f)\cap U$, and $u$ is smooth on
$U\setminus Z(f)$, we obtain $u\in C^1(U)$.

%\medskip
%\noindent\textbf{Proof of (2).}
$(2)$ On $U\setminus Z(f)$, since $P^2$ is holomorphic, we have
$$
\frac{\partial u}{\partial\bar z}
=
P(z)^2\frac{\overline{f'(z)}}{\overline{f(z)}}.
$$
Differentiating with respect to $z$, and using that
$\overline{f'(z)}/\overline{f(z)}$ is anti-holomorphic on $U\setminus Z(f)$, we get
$$
\frac{\partial^2 u}{\partial z\partial\bar z}
=
2P(z)P'(z)\frac{\overline{f'(z)}}{\overline{f(z)}}.
$$
Therefore, on $U\setminus Z(f)$,
$$
\Delta u
=
8P(z)P'(z)\frac{\overline{f'(z)}}{\overline{f(z)}}.
$$
Thus, $g$ is exactly the classical Laplacian of $u$ on $U\setminus Z(f)$.

We claim that $g\in L^\infty_{\mathrm{loc}}(U)$. It suffices to check boundedness near each zero
$a_j$. Near $a_j$, write $P(z)=(z-a_j)Q_j(z)$ with $Q_j(a_j)\neq0$. Then
$P(z)P'(z)=O(|z-a_j|)$. Moreover,
$$
\frac{\overline{f'(z)}}{\overline{f(z)}}
= \frac{m_j}{\overline{z - a_j}} + \frac{\overline{h'_j(z)}}{\overline{h_j(z)}}.
$$
Thus,
$$
P(z)P'(z)\frac{\overline{f'(z)}}{\overline{f(z)}}=O(1)
$$
near $a_j$. Away from $Z(f)\cap U$, the function $g$ is smooth. Hence,
$g\in L^\infty_{\mathrm{loc}}(U)$.

It remains to prove that $\Delta u=g$ holds on $U$ in the sense of distributions. Since
the statement is local, it is enough to check that no extra Dirac mass appears at each zero.
Fix $a_j$, and choose $r>0$ such that $D(a_j,r)\Subset U$ and $D(a_j,r)$ contains no zero of
$f$ other than $a_j$. Let $\eta\in C_c^\infty(D(a_j,r))$. For $0<\varepsilon<r$, set
$\Omega_\varepsilon:=D(a_j,r)\setminus\overline{D(a_j,\varepsilon)}$. Since $u$ is smooth on
$\Omega_\varepsilon$ and satisfies $\Delta u=g$ there, Green's identity gives
$$
\int_{\Omega_\varepsilon}u\Delta\eta\,dA
-
\int_{\Omega_\varepsilon}g\eta\,dA
=
\int_{|z-a_j|=\varepsilon}
\left(
u\frac{\partial\eta}{\partial\nu}
-
\eta\frac{\partial u}{\partial\nu}
\right)ds.
$$
From the expression in the proof of (1), as $z\to a_j$ we have
$u(z)=O(|z-a_j|^2|\log|z-a_j||)$ and
$|\nabla u(z)|=O(|z-a_j||\log|z-a_j||)$. Hence, on $|z-a_j|=\varepsilon$,
$$
|u(z)|\le C\varepsilon^2|\log\varepsilon|,
\qquad
|\nabla u(z)|\le C\varepsilon|\log\varepsilon|.
$$
Since the length of the circle $|z-a_j|=\varepsilon$ is $2\pi\varepsilon$, the boundary integral
is bounded by
$$
C\varepsilon^3|\log\varepsilon|+C\varepsilon^2|\log\varepsilon|,
$$
which tends to $0$ as $\varepsilon\to0^+$. Letting $\varepsilon\to0^+$ gives
$$
\int_{D(a_j,r)}u\Delta\eta\,dA
=
\int_{D(a_j,r)}g\eta\,dA.
$$
Thus $\Delta u=g$ distributionally across $a_j$. Since $a_j$ was arbitrary, $\Delta u=g$ holds
on $U$ in the sense of distributions.

%\medskip
%\noindent\textbf{Proof of (3).}
$(3)$
Fix $a_j\in Z(f)\cap U$, and write $\xi=z-a_j$. As above, near $a_j$ we have
$f(z)=\xi^{m_j}h_j(z)$ with $h_j(a_j)\neq0$, and $P(z)=\xi Q_j(z)$ with
$Q_j(a_j)\neq0$. Set $R_j(z):=Q_j(z)^2$. Then $P(z)^2=\xi^2R_j(z)$ and
$R_j(a_j)\neq0$.

On a punctured neighborhood of $a_j$, a direct computation gives
\begin{align}
\frac{\partial^2 u}{\partial z^2}
&=
(P^2)''(z)\log|f(z)|^2
+
2(P^2)'(z)\frac{f'(z)}{f(z)}
+
P(z)^2
\left(
\frac{f''(z)}{f(z)}
-
\left(\frac{f'(z)}{f(z)}\right)^2
\right).
\label{eq:second-derivative-u-theorem13}
\end{align}
The last two terms in \eqref{eq:second-derivative-u-theorem13} are bounded near $a_j$.
Indeed, $(P^2)'(z)=O(|\xi|)$ and $f'(z)/f(z)=m_j/\xi+O(1)$, while $P(z)^2=O(|\xi|^2)$ and
$$
\frac{f''(z)}{f(z)}
-
\left(\frac{f'(z)}{f(z)}\right)^2
=
-\frac{m_j}{\xi^2}+O(1).
$$
On the other hand, $(P^2)''(a_j)=2R_j(a_j)\neq0$, and
$$
\log|f(z)|^2
=
m_j\log|\xi|^2+O(1).
$$
Therefore,
\begin{align}
\frac{\partial^2 u}{\partial z^2}
=
2m_jR_j(a_j)\log|z-a_j|^2+O(1)
\qquad \text{as } z\to a_j.
\label{eq:singular-second-derivative-u-theorem13}
\end{align}
Since $R_j(a_j)\neq0$, the asymptotic formula
\eqref{eq:singular-second-derivative-u-theorem13} shows that $\partial_z^2u$ is essentially unbounded in every neighborhood of $a_j$. Hence, $\partial_z^2u\notin L^\infty_{\mathrm{loc}}(V)$ for any open neighborhood
$V\subset U$ of $a_j$.
Therefore,
$u\notin W^{2,\infty}_{\mathrm{loc}}(U)$. This proves (3).
\end{proof}

    \begin{remark}
		The exponent $2$ in the definition $u=P^2\log |f|^2$ is critical. Indeed, if one replaces
		$P^2$ by $P^q$ with an integer $q\ge3$, and defines
		$$
		u_q(z):=P(z)^q\log |f(z)|^2,
		$$
		then $u_q\in W^{2,\infty}_{\mathrm{loc}}(U)$, so this construction no longer gives a
		counterexample to the Calder\'on-Zygmund theorem  for $p=\infty$.
		To see this, it suffices to look near a zero $a_j$ of $f$. Write $\xi=z-a_j$. Then
		$f(z)=\xi^{m_j}h_j(z)$ with $h_j(a_j)\neq0$, and $P(z)=\xi Q_j(z)$ with
		$Q_j(a_j)\neq0$. Hence,
		$$
		u_q(z)=\xi^q Q_j(z)^q\left(m_j\log|\xi|^2+\log|h_j(z)|^2\right).
		$$
		After differentiating twice, the only possible unbounded term is of the form
		$
		C\xi^{q-2}\log|\xi|^2.
		$
		This term is unbounded exactly when $q=2$, while it is bounded for every $q\ge3$.
		Thus, $q=2$ is precisely the borderline exponent: it is large enough to make
		$P^2\log|f|^2$ belong to $C^1$, but not large enough to make its second derivatives
		locally bounded.
	\end{remark}

%\textbf{Acknowledgements}

\end{document}